\theoremstyle{plain}
\newtheorem{theorem}{Theorem}
\newtheorem{lemma}[theorem]{Lemma}
\newtheorem{corollary}[theorem]{Corollary}
\newtheorem{claim}[theorem]{Claim}
\theoremstyle{definition}
\providecommand{\keywords}[1]{\textbf{\textit{Keywords:}} #1}
\newcommand{\cI}{\mathcal{I}}
\newcommand\free{\operatorname{{\it free}}}
\title{Complexity of packing common bases in matroids}
\author{
Krist\'of B\'erczi\thanks{MTA-ELTE Egerv\'ary Research Group, Department of Operations Research, E\"otv\"os Lor\'and University, Budapest. Email: \texttt{berkri@cs.elte.hu}.} \and
Tam\'as Schwarcz\thanks{E\"otv\"os Lor\'and University, Budapest. Email: \texttt{schtomi97@caesar.elte.hu}.}
}
\date{}
\begin{document}

\maketitle

\begin{abstract}
One of the most intriguing unsolved questions of matroid optimization is the characterization of the existence of $k$ disjoint common bases of two matroids. The significance of the problem is well-illustrated by the long list of conjectures that can be formulated as special cases, such as Woodall's conjecture on packing disjoint dijoins in a directed graph, or Rota's beautiful conjecture on rearrangements of bases.

In the present paper we prove that the problem is difficult under the rank oracle model, i.e., we show that there is no algorithm which decides if the common ground set of two matroids can be partitioned into $k$ common bases by using a polynomial number of independence queries. Our complexity result holds even for the very special case when $k=2$.

Through a series of reductions, we also show that the abstract problem of packing common bases in two matroids includes the NAE-SAT problem and the Perfect Even Factor problem in directed graphs. These results in turn imply that the problem is not only difficult in the independence oracle model but also includes NP-complete special cases already when $k=2$, one of the matroids is a partition matroid, while the other matroid is linear and is given by an explicit representation.
\end{abstract}
\medskip

\keywords{Matroids, Matroid parity, Packing common bases}

\section{Introduction} \label{sec:intro}

Various graph characterization and optimization problems can be treated conveniently by applying the basic tools of matroid theory. The main role of matroid theory is not only that it helps understanding the true background of known problems but they are often unavoidable in solving natural optimization problems in which matroids do not appear explicitly at all. One of the most powerful results is the matroid intersection theorem of Edmonds \cite{edmonds1970submodular} providing a min-max formula for the maximum cardinality of a common independent set of two matroids. In particular, this gives rise to a characterization of the existence of a common basis. The closely related problem of packing bases in one matroid is also nicely solved by Edmonds and Fulkerson \cite{edmonds1965transversals} even in the more general case when there are $k$ matroids on $S$ and we want to pick a basis from each in a pairwise disjoint way.

Edmonds and Giles \cite{edmonds1977min} initiated a common generalization of network flow theory and matroid theory by introducing the notion of submodular flows. Another framework that generalizes matroid intersection, introduced by Frank and Jordán \cite{frank1995minimal}, characterized optimal coverings of supermodular bi-set functions by digraphs and provided a min-max result in which the weighted version includes NP-complete problems. Despite being widely general, none of these frameworks gave answer for the longstanding open problem of finding $k$ disjoint common bases of two matroids. This problem was open even for $k=2$ in the sense that no general answer was known similar to the case of one matroid, but no NP-complete special cases were known either. The few special cases that are settled include Edmonds' theorem on the existence of $k$ disjoint spanning arborescences of a digraph rooted at the same root node \cite{edmonds1973edge},  K\H{o}nig's result on 1-factorization of bipartite graphs \cite{konig1916graphen}, and results of Keijsper and Schrijver \cite{keijsper1998packing} on packing connectors.

There is a long list of challenging conjectures that can be formulated as a statement about packing common bases of two matroids. Rota's beautiful basis conjecture \cite{huang1994relations} states that if $M$ is a matroid of rank $n$ whose ground set can be partitioned into $n$ disjoint bases $B_1,\dots,B_n$, then it is possible to rearrange the elements of these bases into an $n\times n$ matrix in such a way that the rows are exactly the given bases, and the columns are also bases of $M$. Only partial results are known, see e.g. \cite{geelen2006rota,geelen2007rota,chow2009reduction,harvey2011disjoint,bucic2018halfway}. 

Woodall's conjecture \cite{woodall1978menger} on packing disjoint dijoins in a directed graph is also a special case of packing common bases, as was shown by Frank and Tardos \cite{FrankP5}. Given a directed graph $D$, a dijoin is a subset of arcs whose contraction results in a strongly connected digraph. The conjecture states that the maximum number of pairwise disjoint dijoins equals the minimum size of a directed cut. The conjecture was known to be true for $k=2$, for source-sink connected digraphs by Schrijver \cite{schrijver1982min} and independently by Feofiloff and Younger \cite{feofiloff1987directed}, for series-parallel digraphs by Lee and Wakabayashi \cite{lee2001note}. Recently M\'esz\'aros \cite{Meszaros.note} proved that if $k$ is a prime power, then the conjecture holds if the underlying undirected graph is $(k-1,1)$-partition-connected.

The capacitated packing of $k$-arborescences is yet another problem that can be formulated as packing common bases in two matroids \cite{egres_open}. A \emph{$k$-arborescence} is the union of $k$ pairwise edge-disjoint arborescences rooted at the same vertex. Given a directed graph $D=(V,A)$ with arc-capacities $c:A\rightarrow\mathbb{Z}_+$ satisfying $c(a)\leq\ell$ for $a\in A$ and a node $r_0\in V$, the problem asks if the existence of a capacity-obeying packing of $k\ell$ spanning arborescences rooted at $r_0$ implies the existence of a capacity-obeying packing of $\ell$ $k$-arborescences rooted at $r_0$. Although several papers generalizing Edmonds' theorem on packing arborescences appeared in the last decade (for recent papers with great overviews, see e.g. \cite{matsuoka2019reachability,gao2019arborescences,kakimura2018b}), this problem remains widely open. 

This illustrious list of open problems underpins the significance of the abstract, matroidal version. Given two matroids $M_1=(S,\mathcal{I}_1)$ and $M_2=(S,\mathcal{I}_2)$, there are three different problems that can be asked: (A) Can $S$ be partitioned into $k$ common independent sets of $M_1$ and $M_2$? (B) Does $S$ contain $k$ disjoint common bases of $M_1$ and $M_2$? (C) Does $S$ contain $k$ disjoint common spanning sets of $M_1$ and $M_2$? These problems may seem to be closely related, and (A) and (B) are indeed in a strong connection, but (C) is actually substantially different from the others. 

There is an obvious necessary condition for the existence of a partition into $k$ common independent sets: the ground set has to be partitionable into $k$ independent sets in both matroids. Davies and McDiarmid showed that this condition is sufficient for the case of strongly base orderable matroids \cite{davies1976disjoint}. Kotlar and Ziv \cite{kotlar2005partitioning} proved that if $M_1$ and $M_2$ are matroids on $S$ and no element is 3-spanned in $M_1$ or $M_2$, then $S$ can be partitioned into two common independent sets. They conjectured that this can be generalized to arbitrary $k$: if no element is $(k+1)$-spanned in $M_1$ or $M_2$, then $S$ can be partitioned into $k$ common independent sets. Recently, Takazawa and Yokoi proposed a new approach building upon the generalized-polymatroid intersection theorem \cite{takazawa2018generalized}. Their result explains the peculiar condition appearing in the theorem of Kotlar and Ziv on how many times an element is spanned, and they also provide new pairs of matroid classes for which the natural necessary condition is sufficient.

To the best of our knowledge, the time complexity of problems (A), (B) and (C) under the independence oracle model was open until now. (It is worth mentioning that the independence, rank, circuit-finding, spanning, port, strong basis and certain closure oracles are polynomially equivalent \cite{robinson1980computational,hausmann1981algorithmic,coullard1996independence}.) We will concentrate on the \textsc{PartitionIntoCommonBases} problem, defined as follows: Given matroids $M_1=(S,\mathcal{I}_1)$ and $M_2=(S,\mathcal{I}_2)$, find a partition of $S$ into common bases. Note that this problem is a special case of all (A), (B) and (C). Our main contribution is the following. 

\begin{restatable}{theorem}{MainTheorem}\label{thm:main}
The \textsc{PartitionIntoCommonBases} problem requires an exponential number of independence queries. 
\end{restatable}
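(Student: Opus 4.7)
The plan is an adversary-based indistinguishability argument on a ground set $S$ of size $2r$ with $r$ odd. I will exhibit a fixed matroid $M_1$ and a family of matroids $\{M_2^\perp\}\cup\{M_2^u:u\in E\}$, indexed by an explicit set $E$ of size $2^{r-1}$, such that $(M_1,M_2^\perp)$ admits no partition into two common bases while every $(M_1,M_2^u)$ does, yet the members of the family differ pairwise on only a single size-$r$ independence query. A counting argument then shows that any correct algorithm must make at least $2^{r-1}$ queries, which is exponential in $|S|$.

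Partition $S$ into pairs $P_i=\{a_i,b_i\}$ and let $M_1$ be the partition matroid on these blocks with capacity $1$; its bases are the $2^r$ signature sets $B_v:=\{a_i:v_i=0\}\cup\{b_i:v_i=1\}$ for $v\in\{0,1\}^r$, and $B_{\bar v}=S\setminus B_v$. Observe that $|B_v\triangle B_{v'}|=2\,d(v,v')$, where $d$ denotes Hamming distance, so with $E:=\{v\in\{0,1\}^r:|v|\text{ even}\}$ the family $\{B_v:v\in E\}$ has pairwise symmetric difference at least $4$. By the standard characterisation of sparse paving matroids, any such family is the non-basis family of a rank-$r$ matroid on $S$; let $M_2^\perp$ arise from $\{B_v:v\in E\}$, and for each $u\in E$ let $M_2^u$ arise from $\{B_v:v\in E,\,v\neq u\}$. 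A partition of $S$ into two common bases has the form $(B_v,B_{\bar v})$; for $(M_1,M_2^\perp)$ both $v,\bar v\notin E$ would be required, but $|v|+|\bar v|=r$ is odd, forcing one of them to be even, so no such partition exists. For $(M_1,M_2^u)$ the partition $(B_u,B_{\bar u})$ works: $u$ is removed from the non-basis family, and $|\bar u|$ is odd, so $\bar u\notin E$.

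For the lower bound, the adversary answers every independence query as in $M_2^\perp$. Queries on sets of size other than $r$, on non-signature $r$-subsets, or on $B_v$ with $v\notin E$ return the same verdict in $M_2^\perp$ and in every $M_2^u$; the only query that distinguishes $M_2^\perp$ from $M_2^u$ is $X=B_u$. After fewer than $|E|=2^{r-1}$ queries some $u\in E$ has never been queried in the form $B_u$, so the transcript is equally consistent with $(M_1,M_2^\perp)$ and $(M_1,M_2^u)$, and the algorithm must err on at least one of them. The main technical check is the matroid axiom for $M_2^\perp$ and each $M_2^u$, which reduces to the standard fact that any family of $r$-subsets with pairwise symmetric difference at least $4$ is the circuit-hyperplane family of a sparse paving matroid of rank $r$; the other delicate ingredient is the parity arithmetic that requires $r$ to be odd, a restriction that only costs a constant factor in the exponent.
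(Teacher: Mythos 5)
Your proposal is correct, and it takes a genuinely different and more direct route than the paper. The paper first isolates an auxiliary one-matroid problem, \textsc{PartitionIntoModularBases} (with the pair case \textsc{PartitionIntoParityBases}), and proves an oracle lower bound for it by taking a sparse paving matroid whose circuit-hyperplanes are all the half-size parity sets, then deleting a complementary pair of them (Theorem~\ref{thm:abstract_paired}). It then lifts this to \textsc{PartitionIntoCommonBases} via a nontrivial gadget reduction: Lemma~\ref{lem:help} constructs, for each module $P$, an auxiliary pair of graphic matroids that force any partition to split the attached gadget into pieces of sizes $5|P|$ and $4|P|$, and a truncation trick then forces each module to lie entirely on one side. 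You instead build the two-matroid instance directly: $M_1$ is the rank-$r$ partition matroid on the pairs, whose bases are exactly the $2^r$ transversals $B_v$, and $M_2$ is a sparse paving matroid whose circuit-hyperplanes are the even-weight transversals; the parity obstruction (with $r$ odd, exactly one of $|v|,|\bar v|$ is even) replaces the paper's complementary-pair deletion, and the adversary argument is identical in spirit. Your construction is shorter and has the pleasant bonus that one of the matroids is a partition matroid. What you lose, and what the paper's longer detour buys, is reusability: the reduction from \textsc{PartitionIntoModularBases} to \textsc{PartitionIntoCommonBases} is the same one used to derive the NP-completeness result (Theorem~\ref{thm:main2}) and the linear-representation hardness (Corollary~\ref{cor:main}), whereas your sparse paving $M_2$ has exponentially many circuit-hyperplanes and no succinct description, so it only yields the oracle lower bound. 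A minor stylistic point: your remark that restricting $r$ to be odd ``only costs a constant factor in the exponent'' is correct but worth one sentence of justification (the bound $2^{(|S|-2)/2}$ holds for all $|S|\equiv 2\pmod 4$, which already establishes an exponential lower bound in $|S|$).
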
 

We prove the theorem by reduction from a problem that we call \textsc{PartitionIntoModularBases} and seems to be closely related to the matroid parity. Theorem~\ref{thm:main} immediately implies that all three of the problems (A), (B) and (C) are difficult under the rank oracle model. We also verify that the problem is not only difficult in the independence oracle model, but it also includes NP-complete special cases.

\begin{restatable}{theorem}{MainTheoremNP}\label{thm:main2}
\textsc{PartitionIntoCommonBases} includes NP-complete problems. 
\end{restatable}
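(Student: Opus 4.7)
The plan is to exhibit a polynomial-time reduction from a known NP-complete problem to the restricted version of \textsc{PartitionIntoCommonBases} with $k=2$, where one matroid is a partition matroid and the other is a linearly represented matroid. I would carry this out via a short chain of reductions that matches the claim in the abstract, starting from NAE-SAT (a classical NP-complete problem) and passing through the Perfect Even Factor problem in directed graphs: given $D=(V,A)$, find a spanning subgraph in which every vertex has in-degree and out-degree $1$ and every directed cycle has even length. The first step is to reduce NAE-SAT to Perfect Even Factor, turning variables and clauses into small gadgets whose only perfect even factors correspond to NAE-satisfying assignments; the parity of the cycles in the gadget encodes the ``not-all-equal'' constraint, while the $1$-in/$1$-out constraint encodes the choice of truth value.

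The technical heart is the reduction from Perfect Even Factor to \textsc{PartitionIntoCommonBases} with $k=2$. Given a digraph $D$, I would take as ground set (a modification of) the arc set $A$, possibly duplicating each arc so that the total size is $2|V|$. The partition matroid $M_1$ would have two classes per vertex, one for outgoing arcs and one for incoming arcs, with capacities chosen so that any common basis of $M_1$ and $M_2$ picks exactly one outgoing and one incoming arc at every vertex; such a basis therefore forms a spanning $1$-in/$1$-out subgraph, i.e.\ a disjoint union of directed cycles covering $V$. The explicitly represented linear matroid $M_2$ would encode a parity condition (over $\mathrm{GF}(2)$ or a similar small field) which, combined with the partition constraint, is satisfiable by two disjoint common bases if and only if every cycle in the resulting factor has even length. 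The geometric intuition is that the alternating $2$-colouring that exists on a cycle precisely when it is even is what allows the arcs of the factor, doubled, to be split between two common bases of $M_2$.

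Correctness then follows from checking both directions of this final reduction: an even factor yields a valid partition via alternating colouring of each even cycle, while any partition of the ground set into two common bases produces a factor whose cycles must be even because of the linear parity condition captured by $M_2$. Membership in NP is immediate, since a candidate partition can be verified in polynomial time by querying the explicit representations. The main obstacle I expect is the explicit construction of $M_2$: it must be linear over a small field, admit a representation of polynomial size, have the right rank so that two disjoint common bases exist at all, and enforce the even-cycle condition \emph{tightly} against $M_1$. Aligning the partition matroid and the linear matroid, possibly through gadget arcs or duplicated elements that compensate for vertices of low in- or out-degree, is the delicate combinatorial step; once the construction is in place, NP-hardness transfers from Perfect Even Factor and Theorem~\ref{thm:main2} follows.
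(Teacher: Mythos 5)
Your route is genuinely different from the paper's, and in its present form it has a concrete gap. The paper proves Theorem~\ref{thm:main2} by composing two reductions it has already built: \textsc{NAE-SAT} $\to$ \textsc{PartitionIntoModularTrees} (a graphic-matroid instance of \textsc{PartitionIntoModularBases}, via the variable/clause gadget graph of Theorem~\ref{thm:modtrees}), and then \textsc{PartitionIntoModularBases} $\to$ \textsc{PartitionIntoCommonBases} via the machinery of Lemma~\ref{lem:help} in the proof of Theorem~\ref{thm:main}. The statement you seem to be aiming at (hardness with one partition matroid and one explicitly linear matroid) is obtained in the paper only afterward, in Corollary~\ref{cor:main}, by additionally invoking the Harvey--Kir\'aly--Lau reduction that trades one of the two matroids for a partition matroid on a blown-up ground set. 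Your plan skips all of this scaffolding and tries to go directly from \textsc{PerfectEvenFactor} to a single $(M_1,M_2)$ pair, with $M_1$ a partition matroid on in/out arc classes and $M_2$ a ``$\mathrm{GF}(2)$ parity'' matroid. The paper does use \textsc{PerfectEvenFactor}, but only in the parallel Section~\ref{sec:main2} chain \textsc{PerfectEvenFactor} $\to$ \textsc{$C_{4k+2}$Free2Factor} $\to$ \textsc{PartitionIntoParityBases} for transversal matroids, and even there the common-basis reduction still passes through \textsc{PartitionIntoModularBases} and Lemma~\ref{lem:help}.

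The gap is in the step you yourself flag as ``the main obstacle,'' but it is more structural than just writing down a matrix. First, a sizing problem: if $M_1$ forces any common basis to choose exactly one out-arc and one in-arc at every vertex, then every common basis is a cycle cover of size $|V|$, so a partition of $S$ into two common bases forces $|S|=2|V|$; taking $S$ to be $A$ (or doubling each arc, giving $2|A|$) therefore only works when $|A|=2|V|$ (respectively $|A|=|V|$), which is not guaranteed and certainly not an NP-hard subclass of \textsc{PerfectEvenFactor} you can just assume. Second, and more fundamentally, it is unclear what linear matroid $M_2$ over a small field should be: the even-cycle condition is a global property of the cycle cover determined \emph{after} the common-basis constraint is imposed, not a linear independence condition on arc vectors, and the ``alternating $2$-colouring of an even cycle'' picture requires the two bases to colour the arcs of one and the same cycle cover, whereas in a partition of the ground set each class is its own, potentially different, cycle cover. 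The paper sidesteps exactly these issues by inserting the gadget paths $P_v$ and the doubling of one bipartition class, which is what produces a transversal (hence linear) matroid of the right rank on a ground set of exactly twice that rank; some comparable gadgetry is unavoidable, and without it your sketch does not yet constitute a reduction. Also, your opening step (\textsc{NAE-SAT} $\to$ \textsc{PerfectEvenFactor}) is left entirely to gadget intuition; the paper instead simply cites the known NP-completeness of \textsc{PerfectEvenFactor}, which you could and should do as well.
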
 

The proof of Theorem~\ref{thm:main2} will show that the problem of partitioning into common bases is already difficult in the very special case when $|S|=2r_1(S)=2r_2(S)$, one of the matroids is a partition matroid and the other is a linear matroid given by an explicit linear representation.

\medskip

The rest of the paper is organized as follows. Basic definitions and notation are introduced in Section~\ref{sec:plem}. We introduce the \textsc{PartitionIntoModularBases} problem in Section~\ref{sec:par} and prove its hardness in the independence oracle model. Theorem~\ref{thm:main} is then proved by reduction from \textsc{PartitionIntoModularBases}. In Section~\ref{sec:linear}, we show that \textsc{PartitionIntoModularBases} includes the NP-complete NAE-SAT problem, thus proving Theorem~\ref{thm:main2}. The same proof implies that \textsc{PartitionIntoCommonBases} remains difficult when restricted to linear matroids given by explicit linear representations. Section~\ref{sec:main2} considers the \textsc{PartitionIntoModularBases} problem for transversal matroids. Through a series of reductions that might be of independent combinatorial interest, we show that the NP-complete Perfect Even Factor problem also fits in the framework of packing common bases. Finally, Section~\ref{sec:conclusions} concludes the paper with further remarks and open questions.

\section{Preliminaries} \label{sec:plem}

Matroids were introduced by Whitney \cite{whitney1992abstract} and independently by Nakasawa \cite{nishimura2009lost} as abstract generalizations of linear independence in vector spaces. A matroid $M$ is a pair $(S,\mathcal{I})$ where $S$ is the \textbf{ground set} of the matroid and $\mathcal{I}\subseteq 2^S$ is the family of \textbf{independent sets} that satisfies the following, so-called \textbf{independence axioms}: (I1) $\emptyset\in\mathcal{I}$, (I2) $X\subseteq Y\in \cI\Rightarrow X\in\cI$, (I3) $X,Y\in\cI,|X|<|Y|\Rightarrow\exists e\in Y-X\ \text{s.t.}\ X+e\in\cI$. The \textbf{rank} of a set $X\subseteq S$ is the maximum size of an independent subset of $X$ and is denoted by $r_M(X)$. The maximal independent sets of $M$ are called \textbf{bases}. Alternatively, simple properties of bases can be taken as axioms as well. In terms of bases, a matroid $M$ is a pair $(S,\mathcal{B})$ where $\mathcal{B}\subseteq 2^S$ satisfies the  \emph{basis axioms}: (B1) $\mathcal{B}\neq\emptyset$, (B2) for any $B_1,B_2\in \mathcal{B}$ and $u\in B_1- B_2$ there exists $v\in B_2- B_1$ such that $B_1-u+v\in\mathcal{B}$.

For a set $S$, the matroid in which every subset of $S$ is independent is called a \textbf{free matroid} and is denoted by $M^{\free}_S$. For disjoint sets $S_1$ and $S_2$, the \textbf{direct sum} $M_1\oplus M_2$ of matroids $M_1=(S_1,\cI_1)$ and $M_2=(S_2,\cI_2)$ is a matroid $M=(S_1\cup S_2,\mathcal{I})$ whose independent sets are the disjoint unions of an independent set of $M_1$ and an independent set of $M_2$. The \textbf{$k$-truncation} of a matroid $M=(S,\cI)$ is a matroid $(S,\cI_k)$ such that $\cI_k=\{X\in\cI:|X|\leq k\}$. We denote the $k$-truncation of $M$ by $(M)_{k}$. 

A matroid $M=(S,\cI)$ is called \textbf{linear} (or \textbf{representable}) if there exists a matrix $A$ over a field $\mathbb{F}$ and a bijection between the columns of $A$ and $S$, so that $X\subseteq S$ is independent in $M$ if and only if the corresponding columns in $A$ are linearly independent over the field $\mathbb{F}$. The class of linear matroids includes several well-investigated matroid families such as graphic matroids \cite{tutte1965lectures}, rigidity matroids \cite{graver1991rigidity,whiteley1996some} and gammoids \cite{lindstrom1973vector}. It is not difficult to verify that the class of linear matroids is closed under duality, taking direct sum (when the field $\mathbb{F}$ for linear representations is common), taking minors and taking $k$-truncation. Moreover, if we apply any of these operations for a matroid (or a pair of matroids) given by a linear representation over a field $\mathbb{F}$, then a linear representation of the resulting matroid can be determined by using only polynomially many operations over $\mathbb{F}$ (see e.g. \cite{lokshtanov2018deterministic}).

Given a bipartite graph $G=(S,T;E)$, a set $X\subseteq S$ is independent in the \textbf{transversal matroid} $M=(S,\cI)$ if and only if $X$ can be covered by a matching of $G$. Transversal matroids are also linear as they are exactly the dual matroids of strict gammoids. However, one has to be careful when discussing the complexity of problems related to transversal matroids.
If a transversal matroid $M=(S,\cI)$ is given by an independence oracle, then determining its bipartite graph representation is difficult as it requires an exponential number of independence queries \cite{jensen1982complexity}. If a bipartite graph $G=(S,T;E)$ is given, then a linear representation of the transversal matroid associated with $G$ on the ground set $S$ over the field of fractions $\mathbb{F}(\mathbf{x})$ can be determined in deterministic polynomial time. Nevertheless, such a representation is not suitable for use in efficient deterministic algorithms. Substituting random values for each indeterminate in $\mathbf{x}$ from a field having size large enough leads to a randomized polynomial time algorithm that gives a linear representation over a field where operations can be carried out efficiently \cite{marx2009parameterized}. The derandomization of this approach might require to overcome major obstacles as it would have important consequences in complexity theory \cite{kabanets2004derandomizing}.

A matroid $M=(S,\cI)$ of rank $r$ is called \textbf{paving} if every set of size at most $r-1$ is independent, or in other words, every circuit of the matroid has size at least $r$. Blackburn, Crapo and Higgs \cite{blackburn1973catalogue} enumerated all matroids up to eight elements, and observed that most of these matroids are paving matroids. Crapo and Rota \cite{crapo1970foundations} suggested that perhaps paving matroids dominate the enumeration of matroids. This statement was made precise by Mayhew, Newman, Welsh and Whittle in \cite{mayhew2011asymptotic}. They conjectured that the asymptotic fraction of matroids on $n$ elements that are paving tends to $1$ as $n$ tends to infinity. A similar statement on the asymptotic ratio of the logarithms of the numbers of matroids and sparse paving matroids has been proven in \cite{pendavingh2015number}. We will need the following technical statement \cite{hartmanis1959lattice,welsh2010matroid,frank2011connections}.

\begin{theorem}\label{thm:pav}
Let $r\geq 2$ be an integer and $S$ a set of size at least $r$. Let $\mathcal{H}=\{H_1,\dots,H_q\}$ be a (possibly empty) family of proper subsets of $S$ in which every set $H_i$ has at least $r$ elements and the intersection of any two of them has at most $r-2$ elements. Then the set system $\mathcal{B}_{\mathcal{H}}=\{X\subseteq S:\ |X|=r, X\not\subseteq H_i\ \text{for } i=1,\dots,q\}$ forms the set of bases of a paving matroid. Moreover, every paving matroid can be obtained in this form.
\end{theorem}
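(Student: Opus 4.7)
The plan is to verify the two directions separately. For the forward direction, I would prove that $\mathcal{B}_\mathcal{H}$ satisfies the basis axioms (B1) and (B2), then argue that the resulting matroid is paving. For the converse, I would construct a family $\mathcal{H}$ from the hyperplanes of any given paving matroid and show that it satisfies the stated conditions and reproduces the original bases.

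First, for (B1) non-emptiness: pick $H_1 \in \mathcal{H}$ and some $s \in S \setminus H_1$ (possible since $H_1$ is proper); then take any $(r-1)$-subset $Y$ of $H_1$ (using $|H_1|\geq r$) and set $X := Y+s$. Then $X\not\subseteq H_1$ trivially, and for any other $H_j$, if $X\subseteq H_j$ then $Y\subseteq H_1\cap H_j$, forcing $|H_1\cap H_j|\geq r-1$, contradicting the intersection bound. If $\mathcal{H}=\emptyset$ any $r$-subset of $S$ works. For (B2) basis exchange: given $B_1,B_2\in\mathcal{B}_\mathcal{H}$ and $u\in B_1\setminus B_2$, suppose for contradiction that for every $v\in B_2\setminus B_1$ the set $B_1-u+v$ lies in some $H_{i_v}$. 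Then $B_1-u\subseteq H_{i_v}$, and since $|B_1-u|=r-1>r-2$, all indices $i_v$ must coincide with a single $i$. Then $B_2\setminus B_1\subseteq H_i$, and $B_2\cap B_1\subseteq B_1-u\subseteq H_i$ (using $u\notin B_2$), so $B_2\subseteq H_i$, contradicting $B_2\in\mathcal{B}_\mathcal{H}$.

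Next, to see the resulting matroid is paving, I would show every $(r-1)$-subset $Y\subseteq S$ extends to some $X\in\mathcal{B}_\mathcal{H}$. If no $s\in S\setminus Y$ yields $Y+s\in\mathcal{B}_\mathcal{H}$, then for each such $s$ there is $H_{i_s}\supseteq Y+s$, so $Y\subseteq H_{i_s}$; the same intersection-size argument forces a single $H_i$ containing $Y\cup(S\setminus Y)=S$, contradicting that $H_i$ is proper. Thus every set of size at most $r-1$ is independent, giving the paving property.

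For the converse, given a paving matroid $M=(S,\mathcal{I})$ of rank $r\geq 2$, I would take $\mathcal{H}$ to be the set of hyperplanes of $M$ of size at least $r$. Each such $H_i$ is proper (rank $r-1<r$) and has size $\geq r$ by construction. The key observation is that in a paving matroid every flat $F$ of rank $k\leq r-2$ satisfies $|F|=k$: if $|F|>k$, some $(k{+}1)$-subset of $F$ has size $\leq r-1$ hence is independent, contradicting $r_M(F)=k$. Applied to the intersection of two distinct hyperplanes, which is a flat of rank at most $r-2$, this yields $|H_i\cap H_j|\leq r-2$. Finally, an $r$-set $B$ is a basis of $M$ iff it is not contained in any hyperplane, and if $B$ is contained in some hyperplane $H$ then $|H|\geq|B|=r$ so $H\in\mathcal{H}$; conversely any $H_i\in\mathcal{H}$ is a hyperplane. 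Hence $\mathcal{B}_\mathcal{H}$ coincides with the basis family of $M$.

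The main delicate point I expect is the flat-size argument in the converse direction, which is what makes paving matroids behave so uniformly and is really the structural content behind the intersection bound; everything else reduces to bookkeeping on the size of $B_1 \cap B_2$ and the pigeonhole-type use of the inequality $|H_i\cap H_j|\leq r-2$.
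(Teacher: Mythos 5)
The paper does not prove Theorem~\ref{thm:pav}; it cites it to \cite{hartmanis1959lattice,welsh2010matroid,frank2011connections} as a known structural characterization of paving matroids, so there is no in-text proof to compare against. Your argument is a correct, self-contained proof along the standard lines: the forward direction checks (B1) and (B2) via the pigeonhole observation that an $(r-1)$-set can lie in at most one $H_i$ (since $|H_i\cap H_j|\le r-2$), and then uses the same observation to show every $(r-1)$-set extends to a member of $\mathcal{B}_{\mathcal{H}}$ (so every $(r-1)$-set is independent, i.e., the matroid is paving); the converse takes $\mathcal{H}$ to be the hyperplanes of size at least $r$, uses that in a paving matroid every flat of rank $k\le r-2$ has exactly $k$ elements to get the intersection bound, and identifies bases with $r$-sets avoiding every hyperplane. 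All the small cases you need (e.g., $B_2\setminus B_1\neq\emptyset$, $S\setminus Y\neq\emptyset$ via $|S|\ge r$, and that a dependent $r$-set in a paving matroid is itself a circuit, hence has rank $r-1$ and closure a hyperplane of size $\ge r$) check out, so the proof is complete.
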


Let $M=(S,\cI)$ be a matroid whose ground set is partitioned into two-element subsets called \textbf{pairs}. A set $X\subseteq S$ is called a \textbf{parity set} if it is the union of pairs. The matroid parity problem asks for a parity independent set of maximum size. This problem was introduced by Lawler \cite{lawler1976combinatorial} as a common generalization of graph matching and matroid intersection. Unfortunately, matroid parity is intractable for general matroids as it includes NP-hard problems, and requires an exponential number of queries if the matroid is given by an independence oracle \cite{jensen1982complexity,lovasz1978matroid}. On the positive side, for linear matroids, Lov\'asz developed a polynomial time algorithm \cite{lovasz1978matroid} that is applicable if a linear representation is available. In the next section, we will define a packing counterpart of the matroid parity problem in which the goal is to partition the ground set of a matroid into parity bases.

\section{Hardness in the independence oracle model} \label{sec:par}

Recall that in the matroid parity problem the aim is to find a parity independent set of maximum size. We define an analogous problem regarding partitions of the ground set.

Let $M=(S,\cI)$ be a matroid and let $\mathcal{P}$ be a partition of the ground set into non-empty subsets. Members of $\mathcal{P}$ are called \textbf{modules}, and a set $X\subseteq S$ is \textbf{modular} if it is the union of modules. The \textsc{PartitionIntoModularBases} problem is as follows: Given a matroid $M=(S,\cI)$ over a ground set $S$ of size $2r(S)$ together with a partition $\mathcal{P}$ of $S$, find a partition of $S$ into two modular bases.


In what follows, we prove that \textsc{PartitionIntoModularBases} is intractable for general matroids as it requires an exponential number of independence queries even in the special case when every module is a pair. We will refer to this variant as the \textsc{PartitionIntoParityBases} problem. Although \textsc{PartitionIntoParityBases} seems to be closely related to matroid parity, the relationship between the two problems is unclear.

\begin{theorem} \label{thm:abstract_paired}
The \textsc{PartitionIntoParityBases} problem requires an exponential number of independence queries.
\end{theorem}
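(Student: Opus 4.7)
The plan is a one-shot adversary argument using paving matroids, exploiting the symmetry of parity $r$-sets. Take $r$ to be a large even integer, $|S|=2r$, and fix a pairing $P_1,\dots,P_r$ of $S$. Call an $r$-subset of $S$ a \emph{parity $r$-set} if it is the union of exactly $r/2$ pairs, and let $\mathcal{F}$ be the family of all parity $r$-sets. Note that two distinct parity $r$-sets necessarily differ in at least one full pair on each side, so they intersect in at most $r-2$ elements. Consequently, by Theorem~\ref{thm:pav}, \emph{any} subfamily of $\mathcal{F}$ can serve as the hyperplane family of a paving matroid of rank $r$ on $S$.

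Using this observation I would introduce two kinds of candidate matroids. For each complementary pair $\Pi=\{B,\bar B\}$ of parity $r$-sets, let $M_\Pi$ be the paving matroid of rank $r$ with hyperplane family $\mathcal{F}\setminus\Pi$; then $B$ and $\bar B$ are bases, so $\Pi$ itself is a partition into parity bases and $M_\Pi$ is a \textbf{YES} instance. Let $M^{-}$ be the paving matroid of rank $r$ with hyperplane family the whole of $\mathcal{F}$; no parity $r$-set is a basis in $M^{-}$, so $M^{-}$ is a \textbf{NO} instance.

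Now run the adversary. A query on $X\subseteq S$ with $|X|\neq r$, or with $|X|=r$ but $X\notin\mathcal{F}$, gets the same answer in $M^{-}$ and in every $M_\Pi$ (independent iff $|X|\le r$), so such queries are harmless. On a parity $r$-set $X$, $M^{-}$ answers ``dependent'' while $M_\Pi$ answers ``independent'' iff $X\in\Pi$. The adversary simply answers ``dependent'' to every parity $r$-set query. This is always consistent with $M^{-}$, and it is consistent with $M_\Pi$ as long as neither element of $\Pi$ has been queried. Each query eliminates at most one complementary pair $\Pi$, so after fewer than $N:=\tfrac{1}{2}\binom{r}{r/2}$ queries some pair $\Pi^{*}$ survives, and the algorithm cannot distinguish the YES instance $M_{\Pi^{*}}$ from the NO instance $M^{-}$. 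Since $N=\Omega(2^{r}/\sqrt{r})$ is exponential in $n=2r$, the theorem follows.

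The only real obstacle is making sure the candidate matroids are legitimate: I must invoke Theorem~\ref{thm:pav} and use the ``differ by a whole pair, hence in at least two elements'' observation to verify the intersection condition for $\mathcal{F}$ and all its subfamilies. Everything else — constructing the YES and NO families, checking consistency of the adversary's uniform ``dependent'' answer, and counting complementary pairs — is routine once the paving framework is set up. A minor bookkeeping point is to note that the argument gives a lower bound for instances where every module is a pair, which is exactly the \textsc{PartitionIntoParityBases} setting claimed.
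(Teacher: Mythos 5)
Your proposal is correct and takes essentially the same approach as the paper's proof: both invoke Theorem~\ref{thm:pav} on the family of all parity sets of rank size to build a NO-instance paving matroid, obtain YES-instances by deleting a single complementary pair $\{X_0, S-X_0\}$ from the hyperplane family, and conclude by an indistinguishability argument, since any query sequence missing both members of some complementary pair cannot separate the two matroids. Your write-up is merely more explicit in spelling out the adversary's uniform ``dependent'' reply and the resulting count of $\tfrac{1}{2}\binom{r}{r/2}$ surviving pairs, which the paper leaves implicit.
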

\begin{proof}
Let $S$ be a finite set of $4t$ elements and let $\mathcal{P}$ be an arbitrary partition of $S$ into $2t$ pairs, forming the modules. Let $\mathcal{H}=\{X\subseteq S:|X|=2t,X\ \text{is a parity set}\}$. For a parity set $X_0$ with $|X_0|=2t$, define $\mathcal{H}_0=\mathcal{H}-\{X_0,S-X_0\}$. Both $\mathcal{H}$ and $\mathcal{H}_0$ satisfy the conditions of Theorem~\ref{thm:pav}, hence $\mathcal{B}_{\mathcal{H}}$ and $\mathcal{B}_{\mathcal{H}_0}$ define two matroids $M$ and $M_0$, respectively.

Clearly, the ground set cannot be partitioned into parity bases of $M$, while $X_0\cup(S-X_0)$ is such a partition for $M_0$. For any sequence of independence queries which does not include $X_0$ or $S-X_0$, the result of those oracle calls are the same for $M$ and $M_0$. That is, any sequence of queries which does not include at least one of the parity subsets $X_0$ or $S-X_0$ cannot distinguish between $M$ and $M_0$, concluding the proof of the theorem.
\end{proof}

Now we turn to the proof of Theorem~\ref{thm:main}. We will need the following technical lemma.

\begin{lemma} \label{lem:help}
Let $\ell\in\mathbb{Z}_+$ and let $S$ be a ground set of size $9\ell$. There exist two matroids $M'_\ell$ and $M''_\ell$ of rank $5\ell$ satisfying the following conditions:
\begin{enumerate}[(a)]
\item $S$ can be partitioned into two common independent sets of $M'_\ell$ and $M''_\ell$ having sizes $5\ell$ and $4\ell$; \label{it:a}
\item for every partition $S=S_1\cup S_2$ into two common independent sets of $M'_\ell$ and $M''_\ell$, we have $\{|S_1|,|S_2|\}=\{5\ell,4\ell\}$, that is, one of the partition classes has size exactly $5\ell$ while the other has size exactly $4\ell$. \label{it:b}
\end{enumerate} 
\end{lemma}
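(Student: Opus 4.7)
The plan is to construct $M'_\ell$ and $M''_\ell$ explicitly and then verify both conditions combinatorially. I would partition the ground set as $S = B_1 \sqcup \cdots \sqcup B_\ell$ with each $|B_i|=9$, and within each $B_i$ fix a designated $5$-element ``heavy'' part $H_i$ and a $4$-element ``light'' part $L_i$. I would then take $M'_\ell := \bigoplus_{i=1}^\ell U_{5, B_i}$, a direct sum of rank-$5$ uniform matroids, which has rank $5\ell$; this forces every common independent set to contain at most $5$ elements per block, so in any $2$-partition into common independent sets each block splits as $(5,4)$ or $(4,5)$. Writing $c$ for the number of blocks whose $5$-side lies in $S_1$, we obtain $|S_1| = 4\ell + c$ with $c \in \{0,\ldots,\ell\}$, so condition~(a) follows immediately from the ``all-aligned'' choice $c = \ell$ provided $M''_\ell$ is compatible with this partition.

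The harder direction is condition~(b), which requires forcing $c \in \{0, \ell\}$: the role of $M''_\ell$ is precisely to rule out mixed orientations $c \in \{1,\ldots,\ell-1\}$. My plan is to take $M''_\ell$ also of rank $5\ell$, with its independence structure coupling the blocks of $M'_\ell$ so that every mixed orientation produces a dependent set in one of the two partition classes. A first, natural candidate is a second partition matroid on $S$ with $M''_\ell$-blocks $B''_j := L_j \cup H_{j+1 \bmod \ell}$ of capacity $5$; the aligned partition $S_1 = \bigcup_i H_i$ is still common-independent (since $|S_1\cap B''_j|=|H_{j+1}|=5$), and the combined constraints $h_i+l_i\in\{4,5\}$, $l_j+h_{j+1}\in\{4,5\}$ (where $h_i:=|S_1\cap H_i|$ and $l_i:=|S_1\cap L_i|$) form a cyclic linear system that one would analyze to pin down $c$.

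The main obstacle is that this simple cyclic-shift design already fails condition~(b) for $\ell=2$: the system admits solutions such as $(h_1,l_1,h_2,l_2) = (3,2,3,1)$, giving $|S_1|=9\notin\{8,10\}$. Thus the plan requires a subtler $M''_\ell$: one could make the overlaps between $M'$- and $M''$-blocks unequal, introduce additional interlocking $M''$-blocks with variable capacities, replace $M''_\ell$ by a graphic matroid of a carefully designed multigraph whose cycles propagate orientation decisions across blocks, or invoke Theorem~\ref{thm:pav} to realize $M''_\ell$ as a paving matroid whose hyperplanes encode exactly the mixed orientations to be forbidden (supplemented by small circuits to handle sizes below $5\ell$, since paving alone does not constrain sets of size $\le 5\ell-1$). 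The central technical step --- and the genuinely hard part of the proof --- is to identify a cross-block coupling in $M''_\ell$ whose independence constraints, jointly with those of $M'_\ell$, rule out all mixed orientations; this is delicate because matroid independence expresses only upper-bound constraints whereas the desired property is ``all-or-nothing'' across the $\ell$ blocks. Once such a coupling is in hand, condition~(b) follows from block-by-block analysis, and condition~(a) is verified directly by exhibiting the aligned partition.
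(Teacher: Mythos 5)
Your proposal is in the right spirit but has a genuine, acknowledged gap: you never actually exhibit a working pair $(M'_\ell, M''_\ell)$. You correctly isolate the crux --- forcing all $\ell$ blocks to orient the same way --- and your counterexample to the cyclic-shift partition matroid is valid (with $\ell=2$ and $(h_1,l_1,h_2,l_2)=(3,2,3,1)$, all four capacity constraints $h_i+l_i\in\{4,5\}$ and $l_j+h_{j+1}\in\{4,5\}$ are satisfied, yet $|S_1|=9$). But after ruling that candidate out you only list possible directions (unequal overlaps, extra capacities, graphic or paving matroids) without carrying any of them through. Since the lemma is existential, an unfinished sketch of ``some coupling that would work'' is not a proof.

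The deeper issue is that your choice $M'_\ell = \bigoplus_i U_{5,B_i}$ is itself too weak to support \emph{any} compatible $M''_\ell$ of the required kind, because it only constrains block intersection sizes. The paper's construction instead makes \emph{both} matroids graphic, and crucially builds internal rigidity into each block: on the eight non-$i_j$ elements of block $W_j$, both graphs contain a $K_4$, and any $2$-coloring of $K_4$'s six edges into two forests is a pair of paths of length $3$. Combining the two $K_4$'s forces not merely a $5/3$ size split but the specific conclusion that $\{e_j,f_j,g_j,h_j\}$ all land on the same side. This extra structural constraint --- unavailable from a uniform matroid --- is what makes the cross-block coupling possible: the ninth element $i_j$ is drawn parallel to $h_j$ in $G'$ and parallel to $f_{j+1}$ in $G''$, and since two parallel edges cannot both lie in one forest, the orientation of block $j$ determines that of block $j+1$. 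That chain of parallel-edge implications (Claim~\ref{cl:ij} in the paper) is the concrete mechanism you were searching for; it works precisely because the per-block constraint is sharper than ``at most $5$ per side.'' So the missing idea in your write-up is not only the design of $M''_\ell$ but the realization that $M'_\ell$ must also encode block-internal structure that a single uniform matroid cannot provide.
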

\begin{proof}

\begin{figure}[t!]
\centering
\begin{subfigure}[b]{\textwidth}
  \centering
  \includegraphics[width=.9\linewidth]{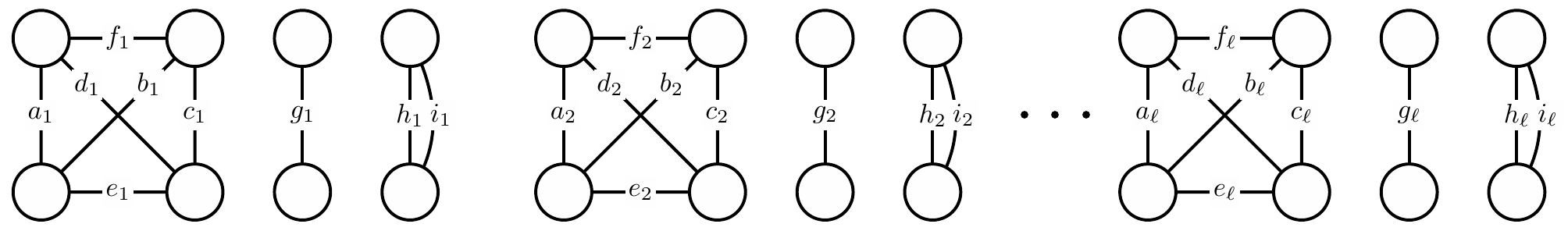}
  \caption{The graph $G'$ corresponding to $M'_\ell$.}
  \label{fig:mat1}
\end{subfigure}\vspace{5pt}
\begin{subfigure}[b]{\textwidth}
  \centering
  \includegraphics[width=.9\linewidth]{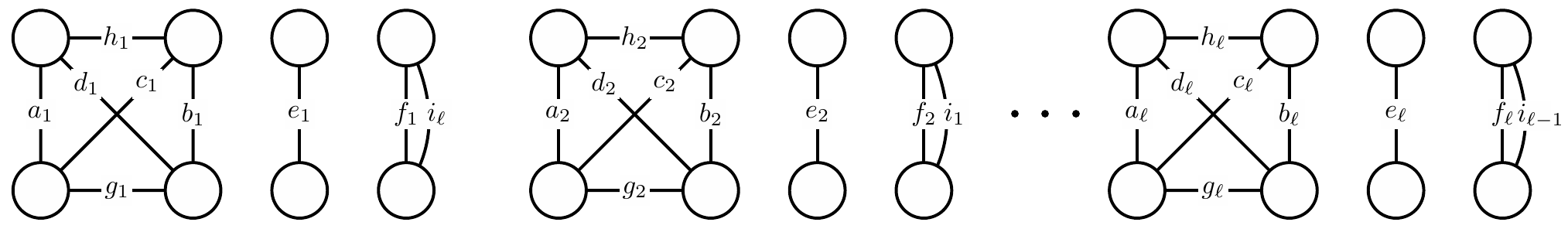}
  \caption{The graph $G''$ corresponding to $M''_\ell$.}
  \label{fig:mat2}
\end{subfigure}
\caption{The edge-labeled graphs defining $M'_\ell$ and $M''_\ell$.}
\label{fig:obs}
\end{figure}

Let $S=\bigcup_{j=1}^\ell W_j$ denote a ground set of size $9\ell$ where $W_j=\{a_j,b_j,c_j,d_j,e_j,f_j,g_j,h_j,i_j\}$. Let $M'_\ell$ and $M''_\ell$ denote the graphic matroids defined by the edge-labeled graphs $G'$ and $G''$ on Figures~\ref{fig:mat1} and \ref{fig:mat2}, respectively. We first prove \eqref{it:a}.

\begin{claim} \label{cl:kk-l}
$S$ can be partitioned into two common independent sets of $M'_\ell$ and $M''_\ell$ having sizes $5\ell$ and $4\ell$.
\end{claim}
\begin{proof}
It is not difficult to find a partition satisfying the conditions of the claim, for example, $S=S_1\cup S_2$ where $S_1=\bigcup_{j=1}^\ell\{d_j,e_j,f_j,g_j,h_j\}$ and $S_2=\bigcup_{j=1}^\ell\{a_j,b_j,c_j,i_j\}$.
\end{proof}

In order to verify \eqref{it:b}, take an arbitrary partition $S=S_1\cup S_2$ into common independent sets of $M'_\ell$ and $M''_\ell$. Let $\hat{W}_j=W_j-i_j$.

\begin{claim} \label{cl:k-xk-l+2}
For each $j=1,\dots,\ell$, $S_1$ and $S_2$ partition $\hat{W}_j$ into common independent sets having sizes $5$ and $3$. Moreover, the elements $e_j,f_j,g_j$ and $h_j$ are contained in the same partition class.
\end{claim}
\begin{proof}
$S_1$ and $S_2$ necessarily partition the $K_4$ subgraphs spanned by $\hat{W}_j$ in $G'$ and $G''$ into two paths of length $3$, so 
$|S_i\cap \{a_j,b_j,c_j,d_j,e_j,f_j\}|= 3$ and $|S_i\cap \{a_j,b_j,c_j,d_j,g_j,h_j\}|= 3$ for $i=1,2$. This implies that either $|S_i\cap\{e_j,f_j\}|=|S_i\cap\{g_j,h_j\}|=1$ for $i=1,2$, or $e_j,f_j,g_j$ and $h_j$ are contained in the same partition class. 

In the former case, we may assume that $g_j\in S_1$ and $h_j\in S_2$. In order to partition the $K_4$ subgraph spanned by $\hat{W}_j$ in $G''$ into two paths of length $3$, either $\{a_j,b_j\}\subseteq S_1$ and $\{c_j,d_j\}\subseteq S_2$ or $\{c_j,d_j\}\subseteq S_1$ and $\{a_j,b_j\}\subseteq S_2$ hold. However, these sets cannot be extended to two paths of length $3$ in $G'$, a contradiction.

Thus $e_j,f_j,g_j$ and $h_j$ are contained in the same partition class. Since $|S_i \cap \{a_j,b_j,c_j,d_j,e_j,f_j\}|=3$ for $i=1,2$, the claim follows.
\end{proof}

Now we analyze how the presence of edges $i_j$ affect the sizes of the partition classes. By Claim~\ref{cl:k-xk-l+2}, we may assume that $\{e_1,f_1,g_1,h_1\}\subseteq S_1$, and so $i_\ell\in S_2$.

\begin{claim} \label{cl:ij}
$\{e_j,f_j,g_j,h_j\}\subseteq S_1$ and $i_j\in S_2$ for $j=1,\dots,\ell$.
\end{claim}
\begin{proof}
We prove by induction on $j$. By assumption, the claim holds for $j=1$. Assume that the statement is true for $j$. As $i_j$ is parallel to $f_{j+1}$ in $G''$, $f_{j+1}\in S_1$. By Claim~\ref{cl:k-xk-l+2}, $\{e_{j+1},f_{j+1},g_{j+1},h_{j+1}\}\subseteq S_1$. As $i_{j+1}$ is parallel to $h_{j+1}$ in $G'$, necessarily $i_{j+1}\in S_2$, proving the inductive step.
\end{proof}

Claims~\ref{cl:k-xk-l+2} and \ref{cl:ij} imply that $|S_1|=5\ell$ while $|S_2|=4\ell$, concluding the proof of the lemma.
\end{proof}

It should be emphasized that, for our purposes, any pair of matroids satisfying the conditions of Lemma~\ref{lem:help} would be suitable; we defined a specific pair, but there are several other choices that one could work with. 

We are now in the position to prove Theorem~\ref{thm:main}.\footnote{The proof is based on reduction from \textsc{PartitionIntoModularBases}, and so, by Theorem~\ref{thm:abstract_paired}, we could assume that every module has size $2$. However, our construction in Section~\ref{sec:linear} for proving that the linear case is already difficult uses modules of larger sizes, hence we show reduction from the general version of \textsc{PartitionIntoModularBases}.}

\MainTheorem*
\begin{proof}
We prove by reduction from \textsc{PartitionIntoModularBases}. Let $M=(S,\cI)$ be a matroid together with a partition $\mathcal{P}$ of its ground set into modules. Recall that $|S|=2r(S)$, that is, the goal is to partition the ground set into two modular bases.

We define two matroids as follows. For every set $P\in \mathcal{P}$, let $M'_P=(S_P,\mathcal{I}'_P)$ and $M''_P=(S_P,\mathcal{I}''_P)$ be copies of the matroids $M'_{|P|}$ and $M''_{|P|}$ provided by Lemma~\ref{lem:help}. We denote 
\[
S'=S\cup\left(\bigcup_{P\in \mathcal{P}} S_P\right).
\]
Note that $|S'|=10|S|$, that is, the size of the new ground set is linear in that of the original. Let
\begin{align*}
M_1&=\left(M\oplus\left(\bigoplus_{P\in \mathcal{P}} M'_P\right)\right)_{\frac{|S'|}{2}}\\[0.5cm]
M_2&=\bigoplus_{P\in \mathcal{P}}(M^{\free}_P\oplus M''_P)_{5|P|}.
\end{align*}
$M_1$ is defined as the $|S'|/2$-truncation of the direct sum of $M$ and the matroids $M'_P$ for $P\in \mathcal{P}$. For the other matroid, we first take the $5|P|$-truncation of the direct sum of $M''_P$ and the free matroid $M^{\free}_P$ on $P$ for each $P\in \mathcal{P}$, and then define $M_2$ as the direct sum of these matroids. We first determine the ranks of $M_1$ and $M_2$.

\begin{claim} \label{cl:rank}
Both $M_1$ and $M_2$ have rank $|S'|/2$.
\end{claim}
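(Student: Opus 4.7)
The plan is a direct rank calculation, invoking only the formulas $r(N_1 \oplus N_2) = r(N_1) + r(N_2)$ for direct sums and $r((N)_k) = \min(r(N), k)$ for truncations, together with the rank data from Lemma~\ref{lem:help}. Since $|S'| = |S| + \sum_{P \in \mathcal{P}} |S_P| = |S| + 9|S| = 10|S|$, the target rank is $|S'|/2 = 5|S|$.

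For $M_1$, I would compute the rank of the direct sum inside the truncation:
\[
r\!\left(M \oplus \bigoplus_{P \in \mathcal{P}} M'_P\right) = r(M) + \sum_{P \in \mathcal{P}} r(M'_P) = \tfrac{|S|}{2} + \sum_{P \in \mathcal{P}} 5|P| = \tfrac{|S|}{2} + 5|S| = \tfrac{11|S|}{2},
\]
using that $r(S) = |S|/2$ (from the hypothesis $|S| = 2r(S)$ of \textsc{PartitionIntoModularBases}) and that each $M'_P$ has rank $5|P|$ by Lemma~\ref{lem:help}. Since $11|S|/2 \geq 5|S| = |S'|/2$, the truncation caps the rank at exactly $|S'|/2$.

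For $M_2$, I would argue summand by summand. For each $P \in \mathcal{P}$,
\[
r(M^{\free}_P \oplus M''_P) = |P| + 5|P| = 6|P| \geq 5|P|,
\]
so the $5|P|$-truncation has rank exactly $5|P|$. Since $M_2$ is the direct sum of these truncations over $P \in \mathcal{P}$, its rank is
\[
\sum_{P \in \mathcal{P}} 5|P| = 5|S| = |S'|/2,
\]
completing the claim.

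There is no real obstacle here; the only thing to keep straight is that the truncation parameter $|S'|/2$ in $M_1$ is strictly less than the full direct-sum rank (which is why the truncation is active and reduces the rank to the desired value), while in $M_2$ the truncation is applied within each summand before assembling the direct sum, and again each local direct-sum rank $6|P|$ exceeds the local truncation threshold $5|P|$.
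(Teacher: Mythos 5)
Your proof is correct and takes essentially the same approach as the paper: a direct rank computation using that ranks add under direct sum and that truncation caps the rank, verifying in each case that the pre-truncation rank meets or exceeds the truncation parameter. The only cosmetic difference is that you compute the pre-truncation rank of $M_1$ explicitly (as $11|S|/2$, using $r(M)=|S|/2$), whereas the paper simply exhibits an independent set of size $|S'|/2$ built from bases of the $M'_P$ alone.
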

\begin{proof}
The rank of $M_1$ is clearly at most $|S'|/2$ as it is obtained by taking the $|S'|/2$-truncation of a matroid. Hence it suffices to show that $M\oplus\left(\bigoplus_{P\in \mathcal{P}} M'_P\right)$ has an independent set of size at least $|S'|/2$. For each $P\in \mathcal{P}$, let $B_P$ be a basis of $M'_P$. Then $\bigcup_{P\in \mathcal{P}} B_P$ is an independent set of $M_1$ having size $\sum_{P\in\mathcal{P}}5|P|=5|S|=|S'|/2$ as requested.

The rank of $(M^{\free}_P\oplus M''_P)_{5|P|}$ is $5|P|$ for each $P\in \mathcal{P}$. This implies that the rank of $M_2$ is at most $\sum_{P\in\mathcal{P}}5|P|=5|S|=|S'|/2$. We get an independent set of that size by taking a basis $B_P$ of $M''_P$ for each $P\in\mathcal{P}$, and then taking their union $\bigcup_{P\in \mathcal{P}} B_P$.
\end{proof}

The main ingredient of the proof is the following.

\begin{claim} \label{cl:02}
If $S'=B'_1\cup B'_2$ is a partition of $S'$ into two common bases of $M_1$ and $M_2$, then each module $P \in \mathcal{P}$ is contained completely either in $B'_1$ or in $B'_2$.
\end{claim}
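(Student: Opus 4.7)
The plan is to analyze the partition $\{B'_1, B'_2\}$ block-by-block: for each fixed $P \in \mathcal{P}$, I would control $B'_i \cap (P \cup S_P)$ using $M_2$ and $B'_i \cap S_P$ using $M_1$, and then invoke Lemma~\ref{lem:help}\eqref{it:b} on $S_P$ to conclude.

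First, I would use the direct-sum structure of $M_2$. Since $M_2 = \bigoplus_{Q \in \mathcal{P}}(M^{\free}_Q \oplus M''_Q)_{5|Q|}$, independence in $M_2$ forces $B'_i \cap (P \cup S_P)$ to be independent in $(M^{\free}_P \oplus M''_P)_{5|P|}$ for $i = 1,2$, so $|B'_i \cap (P \cup S_P)| \leq 5|P|$. Since $B'_1$ and $B'_2$ partition $P \cup S_P$, which has $|P|+9|P|=10|P|$ elements, both inequalities must be tight:
\[
|B'_1 \cap (P \cup S_P)| = |B'_2 \cap (P \cup S_P)| = 5|P|.
\]
As a byproduct, $B'_i \cap S_P$ is independent in $M''_P$ for $i=1,2$.

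Next, I would exploit $M_1$. Because $M_1$ is the truncation of $M \oplus \bigoplus_{Q} M'_Q$, the restriction of each $B'_i$ to any summand is independent in that summand, so $B'_i \cap S_P$ is independent in $M'_P$. Hence $\{B'_1 \cap S_P,\, B'_2 \cap S_P\}$ partitions $S_P$ into two common independent sets of $M'_P$ and $M''_P$, and by Lemma~\ref{lem:help}\eqref{it:b} their sizes are exactly $\{5|P|,4|P|\}$. Assume without loss of generality that $|B'_1 \cap S_P| = 5|P|$. Combined with $|B'_1 \cap (P \cup S_P)| = 5|P|$, this forces $B'_1 \cap P = \emptyset$ and therefore $P \subseteq B'_2$. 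Since $P \in \mathcal{P}$ was arbitrary, every module lies entirely in one of the two partition classes.

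I do not expect a serious obstacle; the argument is essentially bookkeeping once Lemma~\ref{lem:help} is in hand. The one delicate point worth flagging is the interplay between the two truncations: the $5|P|$-truncation inside $M_2$ is what pins the total mass on each block $P \cup S_P$ to exactly $5|P|$ in each class, while the $|S'|/2$-truncation in $M_1$ could \emph{a priori} have loosened the bound on $B'_i \cap S_P$. It is only because the $M_2$-side already fixes the block total that the mandatory $5|P|/4|P|$ split on $S_P$ (forced by the gadget of Lemma~\ref{lem:help}) translates into an all-or-nothing decision for $P$, which is the mechanism gluing the modules into the final partition.
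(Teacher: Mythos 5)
Your proof is correct and follows essentially the same argument as the paper: restrict both $B'_1$ and $B'_2$ to $S_P$, note they are common independent sets of $M'_P$ and $M''_P$, invoke Lemma~\ref{lem:help}\eqref{it:b} to force the $\{5|P|,4|P|\}$ split, and then use the rank bound $5|P|$ on the $M_2$-summand $(M^{\free}_P\oplus M''_P)_{5|P|}$ to conclude that the class already holding $5|P|$ elements of $S_P$ can absorb none of $P$. The only cosmetic difference is that you first pin down $|B'_i\cap(P\cup S_P)|=5|P|$ for both $i$ via the pigeonhole, whereas the paper reads the same conclusion off directly once one class has $5|P|$ elements in $S_P$.
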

\begin{proof}
For an arbitrary module $P$, let $I_1=S_P\cap B'_1$ and $I_2=S_P\cap B'_2$. Clearly, $I_1$ and $I_2$ are independent in both $M'_P$ and $M''_P$. By Lemma~\ref{lem:help}, we may assume that $|I_1|=4|P|$ and $|I_2|=5|P|$. As the rank of $(M^{\free}_P\oplus M''_P)_{5|P|}$ is $5|P|$, we get $P\subseteq B'_1$ as requested.
\end{proof}

The next claim concludes the proof of the theorem.

\begin{claim} \label{cl:equiv}
$S$ has a partition into two modular bases if and only if $S'$ can be partitioned into two common bases of $M_1$ and $M_2$.
\end{claim}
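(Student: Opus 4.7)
My plan is to prove the two implications of Claim~\ref{cl:equiv} by using Lemma~\ref{lem:help} constructively in one direction and using Claim~\ref{cl:02} together with the truncation/direct-sum structure in the other. All size bookkeeping will use $|S'|/2=5|S|$ and $\sum_{P\in\mathcal{P}}|P|=|S|$.

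For the forward direction, assume $S=B_1\cup B_2$ is a partition into two modular bases of $M$. For each $P\in\mathcal{P}$, Lemma~\ref{lem:help}\eqref{it:a} provides a partition $S_P=I_P^{(4)}\cup I_P^{(5)}$ into common independent sets of $M'_P$ and $M''_P$ of sizes $4|P|$ and $5|P|$. I define
\[
B'_1=B_1\cup\Bigl(\bigcup_{P\subseteq B_1} I_P^{(4)}\Bigr)\cup\Bigl(\bigcup_{P\subseteq B_2} I_P^{(5)}\Bigr),
\qquad
B'_2=S'-B'_1.
\]
A direct count gives $|B'_1|=|B_1|+4\sum_{P\subseteq B_1}|P|+5\sum_{P\subseteq B_2}|P|=\tfrac{|S|}{2}+4\cdot\tfrac{|S|}{2}+5\cdot\tfrac{|S|}{2}=5|S|=|S'|/2$, and analogously $|B'_2|=5|S|$. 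Independence in $M_1$ follows because $B_1$ is independent in $M$, each $S_P\cap B'_1$ is independent in $M'_P$, and the resulting independent set of the direct sum has size exactly $|S'|/2$, so it is a basis of the truncation. Independence in $M_2$ is verified summand-by-summand: on each $P\cup S_P$, one has $|P\cap B'_1|+|S_P\cap B'_1|=5|P|$ with the two parts independent in $M^{\free}_P$ and $M''_P$ respectively, hence this is a basis of the $5|P|$-truncation of the corresponding direct sum. The same holds for $B'_2$.

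For the reverse direction, assume $S'=B'_1\cup B'_2$ is a partition into two common bases of $M_1$ and $M_2$. By Claim~\ref{cl:02}, every module $P$ lies entirely in one of $B'_1,B'_2$, so $B_1:=S\cap B'_1$ and $B_2:=S\cap B'_2$ are modular and partition $S$. Because $M_1$ is the $|S'|/2$-truncation of the direct sum $M\oplus\bigoplus_P M'_P$ and $B'_1$ is independent in $M_1$, its restriction $B_1$ to the $M$-component is independent in $M$; the same holds for $B_2$. Hence $|B_1|,|B_2|\le r(S)=|S|/2$, but $|B_1|+|B_2|=|S|$, forcing equality and showing that $B_1,B_2$ are modular bases of $M$.

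I do not anticipate a conceptual obstacle: the essential content has already been absorbed into Lemma~\ref{lem:help} and Claim~\ref{cl:02}, so Claim~\ref{cl:equiv} reduces to careful size accounting across the summands of the two direct-sum/truncation constructions. The only mild subtlety worth flagging is the verification that the sets constructed in the forward direction reach size exactly $|S'|/2$ and exactly $5|P|$ on each summand of $M_2$, since those equalities are what certify that independence in the underlying direct sums is preserved under the two truncations.
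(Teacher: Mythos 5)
Your proof is correct and takes essentially the same route as the paper: the backward direction builds $B'_1,B'_2$ from the $4|P|$/$5|P|$ splits of Lemma~\ref{lem:help}\eqref{it:a} and certifies basicity via size, while the forward direction uses Claim~\ref{cl:02} to conclude that each $P$ lands entirely in one side and then reads off modular bases of $M$. You spell out the summand-by-summand size bookkeeping for $M_1$ and $M_2$ a bit more explicitly than the paper, which simply invokes Claim~\ref{cl:rank}, but the underlying argument is identical.
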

\begin{proof}
For the forward direction, assume that there exists a partition $S'=B'_1\cup B'_2$ of $S'$ into two common bases of $M_1$ and $M_2$. By Claim~\ref{cl:02}, for every module $P\in \mathcal{P}$, the elements of $P$ are all contained either in $B_1$ or in $B_2$. This implies that $B_1=S\cap B'_1$ and $B_2=S\cap B'_2$ are modular sets. By the definition of $M_1$, these sets are independent in $M$. As $|S|=2r(S)$, $B_1$ and $B_2$ are modular bases of $M$. 

To see the backward direction, let $S=B_1\cup B_2$ be a partition of $S$ into modular bases. For each $P\in \mathcal{P}$, let $I^1_P\cup I^2_P$ be a partition of $S_P$ into common independent sets of $M'_P$ and $M''_P$ having sizes $4|P|$ and $5|P|$, respectively. Recall that such a partition exists by Lemma~\ref{lem:help}. Then the sets 
\begin{align*}
B'_1&=B_1\cup\{I^1_P:P\subseteq B_1\}\cup\{I^2_P:P\subseteq B_2\}\qquad\text{and}\\
B'_2&=B_2\cup\{I^1_P:P\subseteq B_2\}\cup\{I^2_P:P\subseteq B_1\}
\end{align*}
form common independent sets of $M_1$ and $M_2$ and partitions the ground set $S'$. By Claim~\ref{cl:rank}, $B'_1$ and $B'_2$ are bases, concluding the proof of the claim.
\end{proof}

The theorem follows by Claim~\ref{cl:equiv}.
\end{proof}

\section{Hardness in the linear case} \label{sec:linear}

The aim of this section is to show that the \textsc{PartitionIntoModularBases} problem might be difficult to solve even when the matroid is given with a consise description, namely by an explicit linear representation over a field in which the field operations can be done efficiently. In order to do so, we consider the \textsc{PartitionIntoModularBases} problem for graphic matroids, called the \textsc{PartitionIntoModularTrees} problem. The problem can be rephrased as follows: Given a graph $G=(V,E)$ and a partition $\mathcal{P}$ of its edge set, find a partition of $E$ into two spanning trees consisting of partition classes.

\begin{theorem} \label{thm:modtrees}
\textsc{PartitionIntoModularTrees} is NP-complete.
\end{theorem}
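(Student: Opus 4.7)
My plan is to prove Theorem~\ref{thm:modtrees} by polynomial-time reduction from NAE-3SAT, which is a classical NP-complete variant of SAT. Membership of \textsc{PartitionIntoModularTrees} in NP is transparent: given a purported partition of $E$, one verifies in polynomial time that each class spans $V$ and is acyclic, and that every module in $\mathcal{P}$ lies in a single class.

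Given an NAE-3SAT instance $\phi$ with variables $x_1,\ldots,x_n$ and clauses $C_1,\ldots,C_m$, where $C_j=\ell_{j,1}\vee\ell_{j,2}\vee\ell_{j,3}$, I would build a multigraph $G_\phi$ together with an edge partition $\mathcal{P}_\phi$ as follows. The variable gadget for $x_i$ consists of two vertices $u_i,v_i$ joined by a pair of parallel edges $e_i^+$ and $e_i^-$. The clause gadget for $C_j$ consists of three fresh vertices $a_j,b_j,c_j$ forming a triangle with edges $f_{j,1},f_{j,2},f_{j,3}$. For each variable $x_i$, one module $P_i^+$ collects $e_i^+$ together with every triangle edge $f_{j,k}$ for which $\ell_{j,k}=x_i$, while a second module $P_i^-$ collects $e_i^-$ together with every $f_{j,k}$ for which $\ell_{j,k}=\bar x_i$. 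Finally, I would attach to the gadget vertices an auxiliary subgraph whose edges form singleton modules, chosen so that $|E(G_\phi)|=2|V(G_\phi)|-2$ and the Nash--Williams conditions for two edge-disjoint spanning trees are met.

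The correctness argument proceeds cleanly from two local observations. First, since $e_i^+$ and $e_i^-$ are parallel, the modules $P_i^+$ and $P_i^-$ must lie in different spanning trees (else a spanning tree would contain a digon); calling the two trees $T_+$ and $T_-$ and interpreting "$P_i^+\subseteq T_+$" as $x_i$ being true then reads the partition as a truth assignment to the variables. Second, the three edges of each clause triangle cannot all lie in the same spanning tree (else that tree would contain a $C_3$), which says exactly that the three modules hosting the literals of $C_j$ are not all placed in the same tree — precisely the NAE condition on $C_j$. In the converse direction, a NAE-satisfying assignment prescribes which module of each variable enters each tree, and the auxiliary subgraph is designed to furnish the slack needed to complete both spanning trees via the Nash--Williams tree packing theorem.

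The principal technical obstacle is the design of the auxiliary subgraph. It must simultaneously (i) bring the total edge count up to the rigid value $2|V|-2$, (ii) form a graph that is globally 2-tree decomposable together with the gadget edges, and (iii) impose no extra constraint that could destroy the NAE correspondence. My plan is to thread the variable and clause gadgets along a doubled backbone of singleton-module edges, possibly identifying a few gadget vertices to hit the exact edge count, and then verify the Nash--Williams/Tutte packing condition uniformly across all NAE-consistent colorings of the modules. This is the step that requires the most care, but it reduces to a purely combinatorial check because all constraints introduced by the gadgets are local.
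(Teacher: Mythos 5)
Your overall strategy — reduce from NAE-SAT, encode each variable by a pair of parallel edges whose two modules are forced into opposite trees, encode each clause by a triangle whose three edges are distributed among the literals' modules so that acyclicity forbids a monochromatic clause — is sound and genuinely different from (and locally cleaner than) the paper's construction. The paper instead uses, for each variable $x_j$, two parallel $s^j$–$t^j$ paths with pendant edges, arranges the clause constraint as a cycle through the pendant vertices, and routes all pendant vertices to a common terminal $t^n$; the paths-plus-pendants structure is what lets both spanning trees be explicitly assembled. Your gadgets are simpler, but that simplicity is exactly what makes the completion step harder, and that step is missing.

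The gap is that the auxiliary subgraph is never built, and without it the backward direction of the reduction is not proved. You correctly identify this as ``the principal technical obstacle,'' but the resolution you sketch does not work as stated. Nash--Williams/Tutte gives a criterion for a graph to contain two edge-disjoint spanning trees; it says nothing about partitioning $E$ into \emph{exactly} two spanning trees (that already forces $|E|=2|V|-2$, which Nash--Williams does not), and, more seriously, it says nothing about doing so when a prescribed sub-collection of modules must go to $T_+$ and the complementary sub-collection to $T_-$. After you fix the NAE assignment, the edges of every module $P_i^{\pm}$ and hence the triangle edges are already assigned to trees; what remains to be shown is that the singleton auxiliary edges can be 2-coloured so that each colour class, together with the already-placed module edges, is a spanning tree. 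That is a constrained completion problem, not an application of the packing theorem, and it has to be verified for \emph{every} NAE-consistent assignment simultaneously, since which vertex of each triangle is left stranded in which tree depends on the assignment. Until a concrete auxiliary graph is specified and this completion is exhibited (for instance, a single hub vertex $h$ adjacent to every gadget vertex does work, but one must check the cases: one hub edge per variable pair per tree, and for each clause two hub edges to the tree holding one triangle edge chosen so as not to close a cycle with that edge), the reduction only establishes the easy forward direction. As written, the proposal is an outline with a correct skeleton but a missing core lemma.
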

\begin{proof}
We prove by reduction from Not-All-Equal Satisfiability, abbreviated as \textsc{NAE-SAT}: Given a CNF formula, decide if there exists a truth assignment not setting all literals equally in any clause. It is known that \textsc{NAE-SAT} is NP-complete, see \cite{schaefer1978complexity}.\footnote{In \cite{schmidt2010computational}, Schmidt proved that \textsc{NAE-SAT} remains NP-complete when restricted to the class \textsc{LCNF}$^3_+$, that is, for monotone, linear and $3$-regular formulas. Although the construction appearing in our reduction could be slightly simplified based on this observation, we stick to the case of \textsc{NAE-SAT} as it appears to be a more natural problem.}

Let $\Phi=(U,\mathcal{C})$ be an instance of \textsc{NAE-SAT} where $U=\{x_1,\dots,x_n\}$ is the set of variables and $\mathcal{C}=\{C_1,\dots,C_m\}$ is the set of clauses. We construct an undirected graph $G=(V,E)$ as follows. We may assume that no clause contains a variable and its negation simultaneously, as for such a clause every assignment has a true value and no assignment sets all literals equally. 

\begin{figure}[t!]
\centering
\begin{subfigure}[b]{0.5\textwidth}
  \centering
  \includegraphics[width=.6\linewidth]{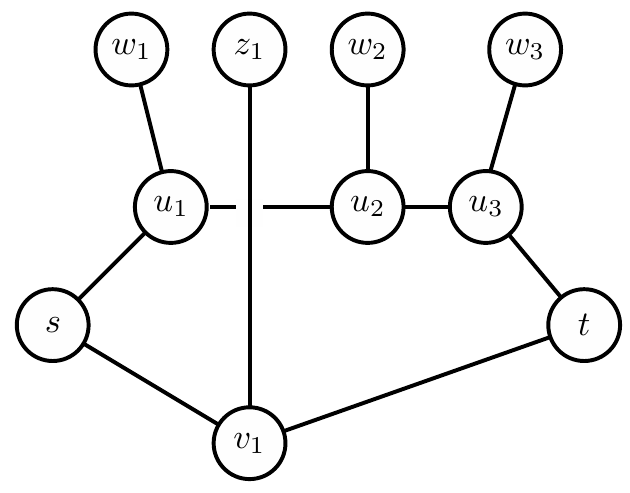}
  \caption{Gadget $H[3,1]$.}
  \label{fig:gadget1}
\end{subfigure}\hfill
\begin{subfigure}[b]{0.5\textwidth}
  \centering
  \includegraphics[width=.6\linewidth]{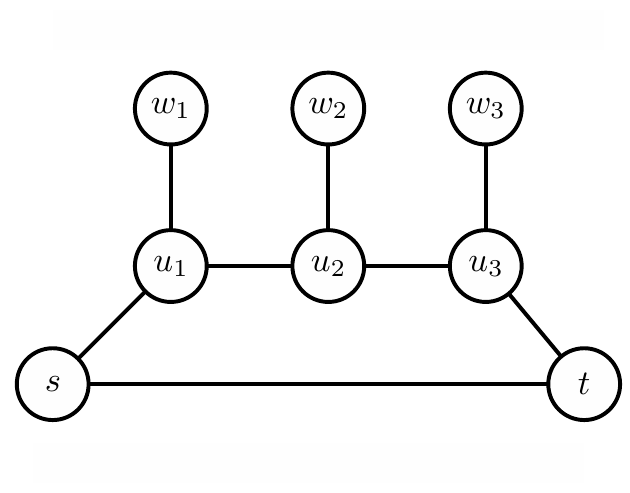}
  \caption{Gadget $H[3,0]$.}
  \label{fig:gadget2}
\end{subfigure}
\caption{Examples for variable gadgets.}
\label{fig:gadget}
\end{figure}

First we construct the variable gadget. Let $H[p,q]$ denote an undirected graph on node set $\{s,t\}\cup\{u_i,w_i:i=1,\dots,p\}\cup\{v_j,z_j:j=1,\dots,q\}$ consisting of the two paths $su_1,u_1u_2,\dots,u_pt$ and $sv_1,v_1v_2,\dots,v_qt$, together with edges $u_iw_i$ for $i=1,\dots,p$ and $v_jz_j$ for $j=1,\dots,q$. If any of $p$ or $q$ is $0$, then the corresponding path simplifies to a single edge $st$ (see Figure~\ref{fig:gadget}).

We construct an undirected graph $G=(V,E)$ as follows. With each variable $x_j$, we associate a copy of $H[p_j,q_j]$ where the literal $x_j$ occurs $p_j$ times and the literal $\bar{x}_j$ occurs $q_j$ times in the clauses. These components are connected together by identifying $t^j$ with $s^{j+1}$ for $j=1,\dots,n-1$. We apply the notational convention that in the gadget corresponding to a variable $x_j$, we add $j$ as an upper index for all of the nodes. For a variable $x_j$, the ordering of the clauses naturally induces an ordering of the occurrences of $x_j$ and $\bar{x}_j$. For every clause $C_i$, we do the following. Assume that $C_i$ involves variables $x_{j_1},\dots,x_{j_{\ell}}$. Recall that no clause contains a variable and its negation simultaneously, hence $\ell$ is also the number of literals appearing in $C_i$. If $C_i$ contains the literal $x_{j_k}$ and this is the $r$th occurrence of the literal $x_{j_k}$ with respect to the ordering of the clauses, let $y^i_{j_k}:=w^{j_k}_r$. If $C_i$ contains the literal $\bar{x}_{j_k}$ and this is the $r$th occurrence of the literal $\bar{x}_{j_k}$ with respect to the ordering of the clauses, let $y^i_{j_k}:=z^{j_k}_r$. Then we add the edges of the cycle $y^i_{j_1},\dots,y^i_{j_\ell}$ to the graph. Finally, we close the construction by adding edges $t^{n}w^j_k$ for $j=1,\dots,n$, $k=1,\dots,p_j$, and adding edges $t^{n}z^j_k$ for $j=1,\dots,n$, $k=1,\dots,q_j$ (see Figure~\ref{fig:graph}). An easy computation shows that the number of edges is $|E|=2|U|+4\sum_{C\in\mathcal{C}}|C|$, while the number of nodes is $|V|=|U|+2\sum_{C\in\mathcal{C}}|C|+1$, that is, $|E|=2|V|-2$.

\begin{figure}[t!]
\centering
\includegraphics[width=0.6\textwidth]{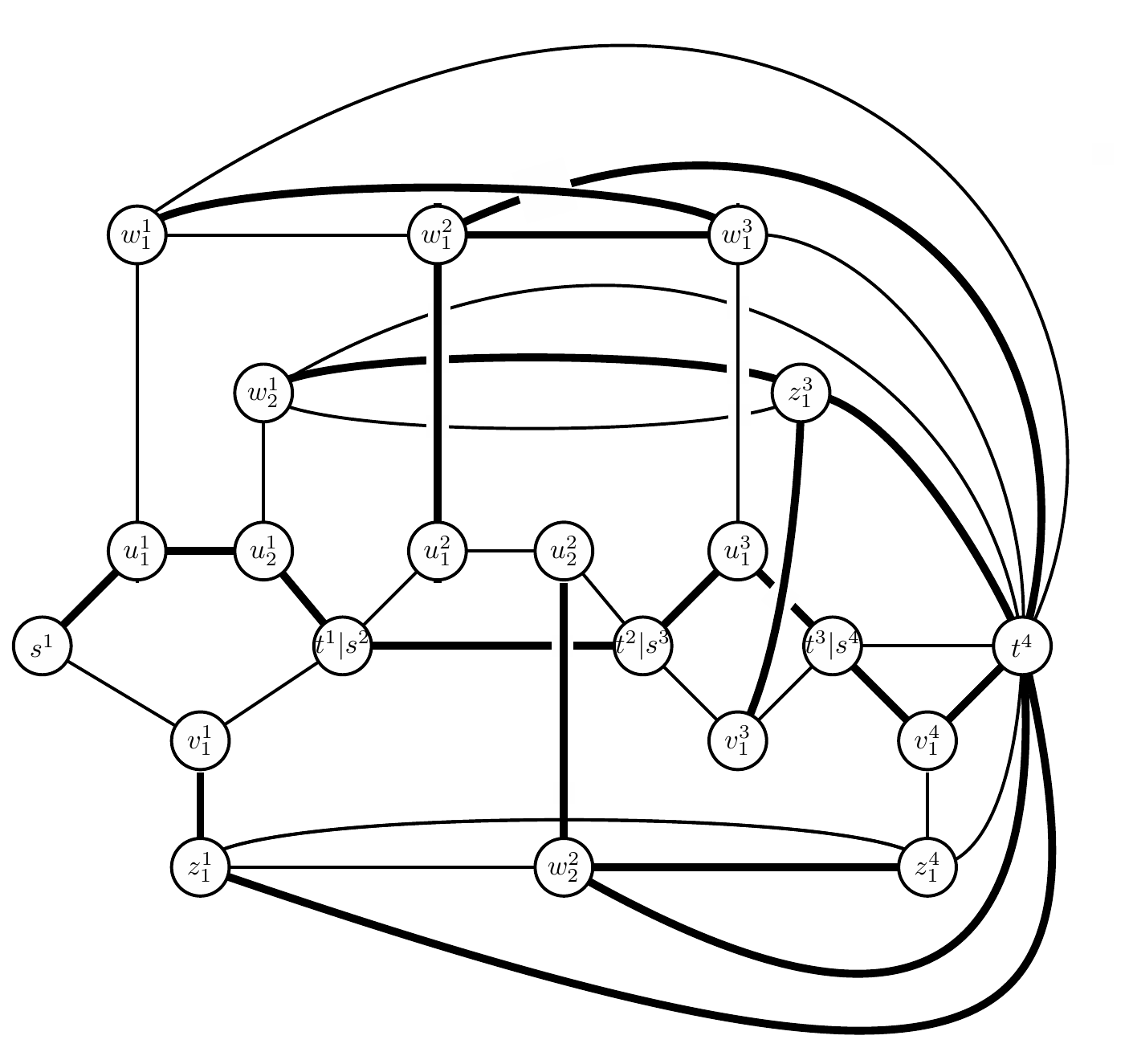}
\caption{The graph corresponding to $\Phi=(x_1\vee x_2\vee x_3)\wedge(x_1\vee \bar{x}_3)\wedge(\bar{x}_1\vee x_2\vee \bar{x}_4)$. Thick and normal edges form modular spanning trees $T_1$ and $T_2$, respectively. Both the assignment $x_1=x_3=1$, $x_2=x_4=0$ corresponding to $T_1$ and the assignment $x_1=x_3=0$, $x_2=x_4=1$ corresponding to $T_2$ are solutions for NAE-SAT.}
\label{fig:graph}
\end{figure}

Now we partition the edge set of $G$ into modules. For every variable $x_j$, if $p_j>0$ then the path $P_j=\{s^ju^j_1,u^j_1u^j_2,\dots,u^j_{p_j}t^j\}$ form a module. Similarly, if $q_j>0$ then the path $N_j=\{s^jv^j_1,v^j_1v^j_2,\dots,v^j_{q_j}t^j\}$ form a module. Finally, the pairs $M^j_k=\{u^j_kw^j_k,w^j_kt_n\}$ form modules of size two for $k=1,\dots,p_j$, and similarly, the pairs $N^j_k=\{v^j_kz^j_k,z^j_kt_n\}$ form modules of size two for $k=1,\dots,q_j$. All the remaining edges of $G$ form modules consisting of a single element.

We claim that $\Phi$ has a truth assignment not setting all literals equally in any clause if and only if $G$ can be partitioned into two modular spanning trees. For the forward direction, let $E=T_1\cup T_2$ be a partition of $E$ into two modular spanning trees. Then
\[
\varphi(x_j)=
\begin{cases}
1 & \ \text{if $p_j>0$ and $P_j\subseteq T_1$, or $p_j=0$ and $s^jt^j\in T_1$,}\\
0 & \ \text{otherwise.}
\end{cases}
\] 
is a truth assignment not setting all literals equally in any clause. To verify this, observe that for a variable $x_j$, if $x_j=1$ then $M^j_k\subseteq T_2$ for $k=1,\dots,p_j$. This follows from the fact that $T_2$ has to span the node $u^j_k$ and $\{u^j_kw^j_k,w^j_kt_n\}$ form a module for $k=1,\dots,p_j$. Similarly, if $x_j=0$ then $N^j_k\subseteq T_2$ for $k=1,\dots,q_j$. Let now $C_i$ be a clause involving variables $x_{j_1},\dots,x_{j_{\ell}}$ and recall the definition of $y^i_{j_1},\dots,y^i_{j_\ell}$. If all the literals in $C_i$ has true value then, by the above observation, the cycle $y^i_{j_1},\dots,y^i_{j_\ell}$ has to lie completely in $T_1$, a contradiction.  If all the literals in $C_i$ has false value then, again by the above observation, the cycle $y^i_{j_1},\dots,y^i_{j_\ell}$  has to lie completely in $T_2$, a contradiction. A similar reasoning shows that $T_2$ also defines a truth assignment not setting all literals equally in any clause.

To see the backward direction, consider a truth assignment $\varphi$ of $\Phi$ not setting all literals equally in any clause. We define the edges of $T_1$ as follows. For each variable $x_j$ with $\varphi(x_j)=1$, we add $P_j$ and $N^j_k$ for $k=1,\dots,q_j$ to $T_1$. For each variable $x_j$ with $\varphi(x_j)=0$, we add $N_j$ and $M^j_k$ for $k=1,\dots,p_j$ to $T_1$. Finally, for each clause $C_i$ involving variables $x_{j_1},\dots,x_{j_{\ell}}$ do the following: for $k=1,\dots,\ell$, if $C_i$ contains the literal $x_{j_k}$ and $\varphi(x_{j_k})=1$ or $C_i$ contains the literal $\bar{x}_{j_k}$ and $\varphi(x_{j_k})=0$, then add the edge $y^i_{j_k}y^i_{j_{k-1}}$ to $T_1$ (indices are meant in a cyclic order). By the assumption that $\varphi$ does not set all literals equally in any clause, this last step will not form cycles in $T_1$. It is not difficult to see that both $T_1$ and its complement $T_2$ are modular spanning trees, thus concluding the proof of the theorem.
\end{proof}

Now Theorem~\ref{thm:main2} is a consequence of the previous results.

\MainTheoremNP*
\begin{proof}
The proof of Theorems~\ref{thm:main} shows that \textsc{PartitionIntoModularBases} can be reduced to \textsc{PartitionIntoCommonBases}. As \textsc{PartitionIntoModularTrees} is a special case of the former problem, the theorem follows by Theorem~\ref{thm:modtrees}.
\end{proof}

As the matroids $M'_\ell,M''_\ell$ given in the proof of Lemma~\ref{lem:help} are graphic, they are linear. If we apply the reduction described in the proof of Theorem~\ref{thm:main} for a graphic matroid $M$, then the matroids $M_1$ and $M_2$ can be obtained from graphic matroids by using direct sums and truncations, hence they are linear as well and an explicit linear representation can be given in polynomial time \cite{lokshtanov2018deterministic}. This in turn implies that \textsc{PartitionIntoCommonBases} is difficult even when both matroids are given by explicit linear representations.

Harvey, Kir\'aly and Lau \cite{harvey2011disjoint} showed that the computational problem of common base packing reduces to the special case where one of the matroids is a partition matroid. Their construction involves the direct sum of $M_1$ and the matroid obtained from the dual of $M_2$ by replacing each element by $k$ parallel elements. This means that if both $M_1$ and $M_2$ are linear, then the common base packing problem reduces to the special case where one of the matroids is a partition matroid and the other one is linear. Concluding these observations, we get the following.

\begin{corollary} \label{cor:main}
The \textsc{PartitionIntoCommonBases} problem includes NP-complete problems even when $r(S)=2|S|$, one of the matroids is a partition matroid and the other is a linear matroid given by an explicit linear representation.
\end{corollary}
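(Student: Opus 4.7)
The plan is to compose three pieces already assembled in the paper: the NP-completeness of \textsc{PartitionIntoModularTrees} from Theorem~\ref{thm:modtrees}, the reduction from \textsc{PartitionIntoModularBases} to \textsc{PartitionIntoCommonBases} used in the proof of Theorem~\ref{thm:main}, and the reduction of Harvey, Kir\'aly and Lau \cite{harvey2011disjoint} that forces one of the matroids into partition form. The paragraph preceding the corollary already sketches exactly this chain; a formal proof is largely a matter of checking that linearity with a polynomial-time computable representation, together with the size-to-rank balance required by the statement, is preserved at every step.

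First I would take an instance $(G,\mathcal{P})$ of \textsc{PartitionIntoModularTrees} and apply the construction in the proof of Theorem~\ref{thm:main} with $M$ equal to the graphic matroid of $G$. The matroids $M'_\ell$ and $M''_\ell$ from Lemma~\ref{lem:help} are graphic, hence every matroid entering the construction is linear; moreover the three operations used—direct sum, adjoining a free matroid, and $k$-truncation—preserve linearity and admit polynomial-time procedures for producing an explicit representation over a common field \cite{lokshtanov2018deterministic}. Claim~\ref{cl:rank} guarantees that the resulting pair $(M_1,M_2)$ satisfies $r(M_1)=r(M_2)=|S'|/2$, so the balanced ground set condition already holds at this intermediate stage.

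Next I would invoke the Harvey-Kir\'aly-Lau reduction with $k=2$ on the linear pair $(M_1,M_2)$, obtaining a pair $(N_1,N_2)$ in which $N_2$ is a partition matroid and $N_1$ is the direct sum of $M_1$ with the matroid derived from $M_2^*$ by replacing each element by two parallel copies. Duality, parallel duplication, and direct sum all act effectively on the representing matrices, so $N_1$ remains linear with an explicit representation computable in polynomial time from those of $M_1$ and $M_2$. A short calculation tracks the size-to-rank ratio through the transformation and confirms that the final instance satisfies the condition stated in the corollary.

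Composing the three reductions yields a polynomial-time transformation from \textsc{PartitionIntoModularTrees} to the restricted variant of \textsc{PartitionIntoCommonBases} described in the corollary. NP-hardness then follows from Theorem~\ref{thm:modtrees}, while membership in NP is immediate once an explicit linear representation is in hand. The only delicate point is the last step, where one must simultaneously verify preservation of linearity, polynomial-time computability of the representing matrix, and the prescribed size-to-rank balance; this is essentially a bookkeeping exercise once the Harvey-Kir\'aly-Lau construction is unpacked, and I do not anticipate any conceptual obstacle.
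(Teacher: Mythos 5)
Your proposal follows exactly the route the paper itself takes: NP-completeness of \textsc{PartitionIntoModularTrees} (Theorem~\ref{thm:modtrees}), the reduction in the proof of Theorem~\ref{thm:main} specialized to graphic $M$ (so that $M_1,M_2$ are linear via direct sums and truncations of graphic matroids, with explicit representations from \cite{lokshtanov2018deterministic}), and finally the Harvey--Kir\'aly--Lau construction to force a partition matroid. The one detail you leave to ``bookkeeping''---verifying the size-to-rank balance after the Harvey--Kir\'aly--Lau step---is equally implicit in the paper (whose stated condition ``$r(S)=2|S|$'' is evidently a typo for $|S|=2r(S)$), so there is no substantive divergence between your argument and theirs.
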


\section{Hardness in another special case: the Perfect Even Factor problem} \label{sec:main2}

Let us recall that in the \textsc{PartitionIntoParityBases} problem a matroid $M=(S,\cI)$ is given together with a partition of its ground set into pairs, and the goal is to find a partition of $S$ into parity bases. The aim of this section is to show that \textsc{PartitionIntoParityBases} is difficult for transversal matroids.

First we define the \textsc{$C_{4k+2}$Free2Factor} problem: Given a biparite graph $G=(S,T;E)$, decide if $G$ admits a $2$-factor in which the length of each cycle is a multiple of $4$. This problem was previously studied in \cite{takazawa2017excluded}, where it was shown that the problem is tractable for a special subclass of bipartite graphs. However, for general bipartite graphs \textsc{$C_{4k+2}$Free2Factor} is NP-complete.

\begin{theorem} \label{thm:c4k2}
\textsc{$C_{4k+2}$Free2Factor} is NP-complete.
\end{theorem}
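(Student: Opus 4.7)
My plan is to reduce from the \textsc{PerfectEvenFactor} problem in directed graphs, which is well-known to be NP-complete: given a digraph $D=(V,A)$, decide whether $D$ admits a spanning subgraph in which every vertex has in-degree and out-degree $1$ and every directed cycle has even length.

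First I would build the bipartite graph $G=(S,T;E)$ by introducing gadgets. Each vertex $v \in V$ is replaced by a \emph{vertex gadget}: a $6$-cycle on six new vertices $v_1,v_2,v_3 \in S$ and $v_4,v_5,v_6 \in T$, along the circuit $v_1 v_4 v_2 v_5 v_3 v_6 v_1$. Each arc $a=(u,v) \in A$ becomes an \emph{arc gadget}: four new vertices $a^1,a^3 \in T$ and $a^2,a^4 \in S$, the $5$-edge path $u_1,a^1,a^2,a^3,a^4,v_4$, and one additional ``shortcut'' edge $a^1 a^4$. Note that $v_2,v_3,v_5,v_6$ in each vertex gadget and $a^2,a^3$ in each arc gadget have degree exactly $2$ in $G$, so their incident edges lie in every $2$-factor.

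Next I would run the case analysis forced by these degree-$2$ vertices. For a vertex gadget: either the edge $v_1 v_4$ is used, closing the gadget as its internal $6$-cycle of length $\equiv 2 \pmod 4$, or it is not, in which case $v_1$ is saturated by exactly one edge $v_1 a^1$ leading to an arc out of $v$ and $v_4$ by exactly one edge $a^4 v_4$ coming from an arc into $v$. For an arc gadget: either the shortcut $a^1 a^4$ is used (closing the gadget as an internal $4$-cycle) or it is absent and the full $5$-edge path from $u_1$ to $v_4$ belongs to the $2$-factor, in which case we call the arc ``selected''. Since the internal $6$-cycle is forbidden by the $C_{4k+2}$-free condition, every vertex gadget is forced into the second state, so the selected arcs form a spanning subgraph of $D$ in which each vertex has in-degree and out-degree exactly $1$, that is, a cycle cover. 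Tracing a single directed cycle of length $\ell$ of this cover around the corresponding cycle in $G$ yields $5\ell$ edges through vertex gadgets and $5\ell$ edges through arc gadgets, hence a $G$-cycle of length $10\ell$. Because $10\ell \equiv 0 \pmod 4$ iff $\ell$ is even, $G$ has a $C_{4k+2}$-free $2$-factor iff $D$ has a perfect even factor, and membership in NP is clear since a $2$-factor is a polynomial-size certificate.

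The main obstacle is choosing the two gadget sizes correctly. The $6$-cycle vertex gadget is essential: a $4$-cycle version would leave its ``unwanted'' internal state with length $\equiv 0 \pmod 4$, and the parity constraint would no longer force every vertex of $D$ to appear in the cycle cover. Dually, the internal $4$-cycle inside an arc gadget is deliberately chosen so that ``unselected'' arcs contribute a permitted cycle. One also has to rule out exotic configurations inside the arc gadget (for instance, the asymmetric case in which only one of $u_1 a^1$, $a^4 v_4$ is used), which follows by the same degree-forcing analysis but requires explicit verification.
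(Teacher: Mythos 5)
Your proof is correct and reduces from \textsc{PerfectEvenFactor}, the same source the paper uses, with the same key arithmetic: a dicycle of length $\ell$ is turned into a $G$-cycle of length $c\ell$ for a constant $c \equiv 2 \pmod 4$, so that $c\ell \equiv 0 \pmod 4$ iff $\ell$ is even. Your gadgets are more elaborate than the paper's, though. The paper duplicates each node $v$ into $v' \in S$ and $v'' \in T$, joins them by a $5$-edge path $P_v$, and represents each arc $(u,v)$ by the single edge $u'v''$ (so $c=6$); the forced path alone makes $v'$ and $v''$ each pick up exactly one arc edge in any $2$-factor, so coverage of every node of $D$ is automatic, unselected arcs simply do not appear, and no closing or shortcut edges are needed anywhere. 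You instead give each vertex a $6$-cycle gadget (whose closed state must then be explicitly excluded via the $C_{4k+2}$-free condition) and each arc a $5$-edge path plus a shortcut $a^1a^4$ (so $c=10$), the shortcut being necessary because your arc gadget contains its own degree-$2$ internal vertices $a^2,a^3$ that must still be covered when the arc is unselected. Both designs work; the extra machinery in yours is the price of putting degree-$2$ vertices inside the arc gadget rather than keeping arcs as bare edges. Your worry about the ``asymmetric'' arc-gadget state is correctly dispelled by the degree argument you cite: after the forced path $a^1a^2a^3a^4$, both $a^1$ and $a^4$ need exactly one more incident edge, so they either both take the shortcut or neither does, and your three states are exhaustive.
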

\begin{proof}
We prove by reduction from \textsc{PerfectEvenFactor}: Given a directed graph $D=(V,A)$, decide if there exists a node-disjoint collection of directed cycles of even length covering every node of $D$. This problem was shown to be NP-complete in \cite{cunningham2000combinatorial}.\footnote{In fact, \cite{cunningham2000combinatorial} proves hardness of finding an even factor with maximum size. However, applying the same proof for the 2P2N-SAT problem that is also NP-complete \cite{yoshinaka2005higher,berman04approx}, one get the desired hardness result for the Perfect Even Factor problem.}

Given an instance $D=(V,A)$ of \textsc{PerfectEvenFactor}, we construct a bipartite graph $G=(S,T;E)$ as follows. Let $V'$ and $V''$ denote two copies of $V$. The copies of a node $v\in V$ are denoted by $v'$ and $v''$, respectively. For each $v\in V$, add a path $P_v=\{v'w^v_1,w^v_1w^v_2,w^v_2w^v_3,w^v_3w^v_4,w^v_4v''\}$ of length $5$ between $v'$ and $v''$. For every arc $uv\in A$, add an edge $u'v''$ to the graph. Note that the graph $G=(S,T;E)$ thus obtained is bipartite and, say, $V'\subseteq S$ and $V''\subseteq T$. For a set $F\subseteq A$, let $E_F=\{u'v''\in E:uv\in F\}$ denote the corresponding set of edges in $G$.  We claim that $D$ admits a perfect even factor if and only if $G$ has a $C_{4k+2}$-free $2$-factor (see Figure~\ref{fig:reduction} for the construction). 

\begin{figure}[t!]
\centering
\begin{subfigure}[b]{.35\textwidth}
  \centering
  \includegraphics[width=.8\linewidth]{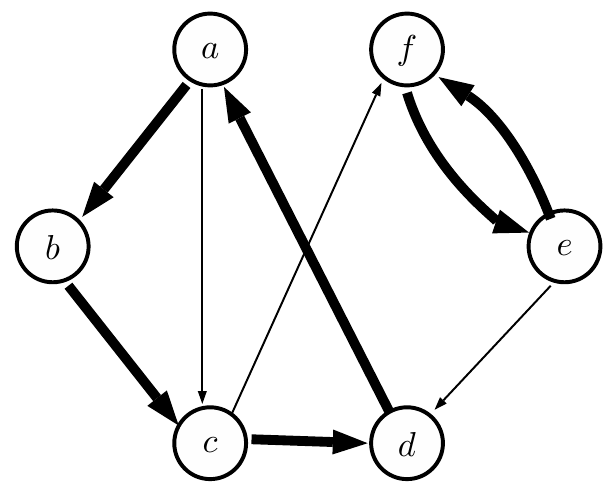}
  \caption{A directed graph $D$ with an even factor $F$ (thick arcs).}
  \label{fig:fac1}
\end{subfigure}\hfill
\begin{subfigure}[b]{.6\textwidth}
  \centering
  \includegraphics[width=.7\linewidth]{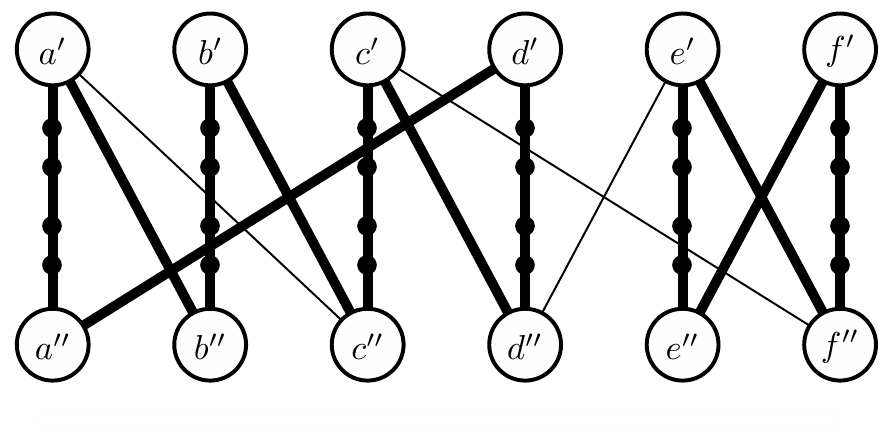}
  \caption{The $2$-factor $N$ corresponding to $F$ (thick edges). Note that the length of each cycle is a multiple of $4$.}
  \label{fig:fac2}
\end{subfigure}
\caption{Reduction from \textsc{PerfectEvenFactor} to \textsc{$C_{4k+2}$Free2Factor}.}
\label{fig:reduction}
\end{figure}

For the forward direction, let $N\subseteq E$ be a $C_{4k+2}$-free $2$-factor of $G$. Due to the presence of nodes having degree $2$, $N$ necessarily contains all the edges of the path $P_v$ for $v\in V$. This implies that the degree of every node $v\in V'\cup V''$ in $N\cap E_A$ is exactly $1$. Hence $N\cap E_A$ corresponds to a subgraph of $D$ in which every node has in- and out-degrees exactly $1$, thus forming a perfect cycle factor of $D$. The length of a cycle in $N$ is six times the original length of the corresponding directed cycle in $D$. As the length of every cycle in $N$ is a multiple of $4$, the corresponding dicycles have even lengths.

To see the backward direction, consider a perfect even factor $F\subseteq A$ in $D$. Then $N=E_F\cup\bigcup_{v\in V} P_v$ is a $C_{4k+2}$-free $2$-factor in $G$. Indeed, it is clear that $N$ is a $2$-factor. The length of a cycle in $N$ is six times the original length of the corresponding directed cycle in $F$. As $F$ was assumed to be an even factor, every cycle in $N$ has a length that is a multiple of $4$, concluding the proof of the theorem. 
\end{proof}
 
Now we show that the \textsc{PartitionIntoParityBases} problem is difficult already for transversal matroids.
 
\begin{theorem} \label{thm:pairednp}
\textsc{PartitionIntoParityBases} includes the \textsc{$C_{4k+2}$Free2Factor} problem.
\end{theorem}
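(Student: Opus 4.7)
The plan is to reduce \textsc{$C_{4k+2}$Free2Factor} to \textsc{PartitionIntoParityBases} on transversal matroids: given an instance $G=(S,T;E)$, build a bipartite graph $G^*=(S^*,T^*;E^*)$ defining the transversal matroid $M$, together with a pairing on $S^*$ satisfying $|S^*|=2r(M)$, such that $G$ admits a $C_{4k+2}$-free 2-factor if and only if $S^*$ can be partitioned into two parity bases of $M$.

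The core intuition comes from the 2-regular case (where $G$ is itself a disjoint union of even cycles $\bigsqcup C_{2l_i}$). In this case I would take $S^*=E$, $T^*=S$, connect each edge $e=st$ to its $S$-endpoint $s$, and pair the two edges incident to each $t\in T$. A parity basis is then a union of pairs giving one incident edge to every $s\in S$, i.e.\ a perfect matching-like selection, and two parity bases partitioning $S^*$ correspond to 2-edge-coloring the auxiliary graph $H$ on vertex set $S$ (with edges equal to the pairs at $T$-vertices) into two perfect matchings. Since $H$ is 2-regular and its cycles have lengths $l_i$ (half the lengths of the cycles of $G$), such a partition exists iff every $l_i$ is even, iff every cycle of $G$ has length divisible by 4. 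One also checks $|S^*|=|E|=2n=2|T^*|=2r(M)$, matching the problem formulation.

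For general bipartite $G$, the construction has to additionally \emph{select} a 2-factor $F\subseteq E$ before applying the parity analysis. The plan is to use the classical $f$-factor gadget (with $f\equiv 2$): at each vertex $v$ of $G$ of degree $d_v$, replace $v$ by a bipartite gadget with $d_v$ edge-nodes and $d_v-2$ dummy-nodes forming a $K_{d_v,d_v-2}$, so that perfect matchings of the resulting bipartite graph $\tilde G$ are in bijection with 2-factors of $G$. I would then build the transversal matroid $M$ by taking the side of $\tilde G$ containing the edge-nodes of the $T$-gadgets together with the dummy-nodes of the $S$-gadgets as $S^*$ (possibly duplicating to enforce $|S^*|=2r(M)$), with the opposite side as $T^*$. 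The pairing would group the two edges of each pair at a $T$-gadget as in the 2-regular case, while the dummy-nodes absorb edges that are not selected for the 2-factor. A valid partition into two parity bases then simultaneously (i) selects a 2-factor $F$ (via which pairs match across gadgets versus to dummy slacks), and (ii) 2-colors the edges of $F$ so that the induced contracted graph $H_F$ decomposes into two perfect matchings, giving the $C_{4k+2}$-free condition by the same cycle parity argument as the 2-regular case.

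The main obstacle will be controlling the slack/dummy structure so that $|S^*|=2r(M)$ and so that every partition into two parity bases is forced to correspond to a genuine $C_{4k+2}$-free 2-factor (no spurious partitions via absorbers). One technical step is handling vertices of small or odd degree by adding pendant edges or locally adjusting the gadget; another is verifying that non-2-factor edges, once routed through absorbers, do not contribute edges to $H_F$ and therefore do not interfere with the cycle parity argument. The correctness proof then proceeds in two directions: from a $C_{4k+2}$-free 2-factor $F=M_1\cup M_2$ one assigns $M_i$'s pairs to basis $i$ and routes the remaining edges through absorbers to produce a valid partition; conversely, from a valid partition one extracts $F$ as the set of edges whose pairs match into $T$-gadget edge-nodes rather than absorbers, and the cycle structure of the two color classes on $H_F$ forces every cycle of $F$ to have length divisible by $4$.
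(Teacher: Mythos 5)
Your warm-up for the $2$-regular case is correct and essentially captures the combinatorial heart of the matter: the auxiliary $2$-regular graph $H$ on vertex set $S$ obtained by contracting each $t\in T$ to an edge decomposes into two perfect matchings precisely when every cycle of $H$ is even, i.e.\ every cycle of $G$ has length $\equiv 0\pmod 4$. So far so good, and a minor aside: this viewpoint is a slightly ``dualized'' version of what the paper does.

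The problem is the jump to general bipartite $G$. What you offer there is not a proof but a plan with the hard part left open, and I think the hard part is genuinely hard as formulated. In \textsc{PartitionIntoParityBases} the pairing of the ground set is part of the \emph{input} and is fixed before any basis is chosen. In your sketch the natural pairs are ``the two edges of a pair at a $T$-gadget,'' but once $d_t>2$ the $T$-gadget has $d_t$ edge-nodes, and which two of them a $2$-factor uses varies with the $2$-factor. There is no canonical way to fix a pairing on these $d_t$ edge-nodes in advance that remains compatible with \emph{every} candidate $2$-factor; you would have to pair edge-nodes with dummies and with each other in some ad hoc way, and it is exactly at that point that spurious parity bases start appearing (e.g.\ a pair consisting of one edge-node and one dummy selects ``half an edge'' of $F$ with a slack, which corrupts the cycle-parity argument on $H_F$). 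You name this obstacle yourself (``controlling the slack/dummy structure \ldots no spurious partitions via absorbers'') but do not resolve it, and you also do not verify $|S^*|=2r(M)$ for the gadget graph. As it stands, the general case is missing.

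The paper's construction sidesteps all of this and is worth contrasting. It does not try to force the matroid to ``select'' a $2$-factor via an $f$-factor gadget. Instead it duplicates the $S$-side: $G^+=(S'\cup S'', T; E^+)$ with $s't,s''t\in E^+$ for every $st\in E$, takes $M$ to be the transversal matroid on $S'\cup S''$, and pairs $\{s',s''\}$ for each $s\in S$. Because $r(M)=|T|$, a parity basis is exactly a choice of $S_1\subseteq S$ with $|S_1|=|T|/2$ together with a way to give each $s\in S_1$ two distinct neighbours in $T$ so that every $t\in T$ is used once; a partition into two parity bases then produces two such subgraphs $F_1,F_2$, whose union is a $2$-factor in which colours alternate at $T$-vertices and are constant at $S$-vertices, forcing cycle lengths $\equiv 0\pmod 4$. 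Selection and colouring come ``for free'' from the matroid axioms, no slack vertices needed. I would scrap the $f$-factor gadgetry and redo the general case along these lines (your $2$-regular analysis survives as the cycle-parity lemma at the end).
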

\begin{proof}
Let $G=(S,T;E)$ be an instance of \textsc{$C_{4k+2}$Free2Factor}. We may assume that $|S|=|T|$ as otherwise there is certainly no $2$-factor in $G$. We define a new bipartite graph $G^+=(S'\cup S'',T;E^+)$, where $S'$ and $S''$ are two copies of $S$. The copies of a node $s\in S$ will be denoted by $s'$ and $s''$. For each $st\in E$, we add the edges $s't,s''t$ to $E^+$. Let $P=\{\{s',s''\}:\ s\in S\}$ denote the partitioning of $S'\cup S''$ into pairs where each pair consists of the two copies of a node in $S$. Finally, let $M=(S'\cup S'',\mathcal{I})$ denote the transversal matroid on $S'\cup S''$ defined by $G^+$. We claim that $G$ admits a $C_{4k+2}$-free $2$-factor if and only if the ground set of $M$ can be partitioned into two parity bases (see Figure~\ref{fig:reduction2} for the construction). 

\begin{figure}[t!]
\centering
\begin{subfigure}[t]{.37\textwidth}
  \centering
  \includegraphics[width=.7\linewidth]{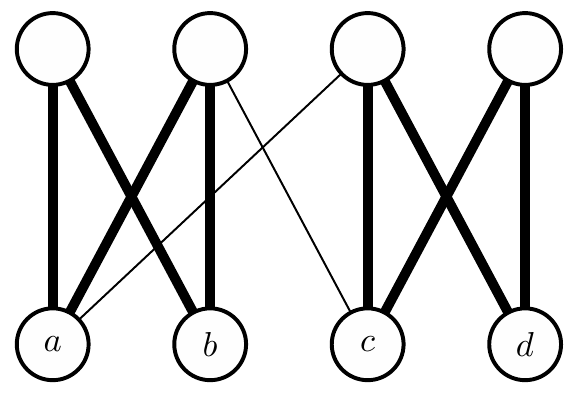}
  \caption{A bipartite graph $G=(S,T;E)$ with a $C_{4k+2}$-free $2$-factor $F$ (thick edges).}
  \label{fig:par1}
\end{subfigure}\hfill
\begin{subfigure}[t]{.6\textwidth}
  \centering
  \includegraphics[width=.9\linewidth]{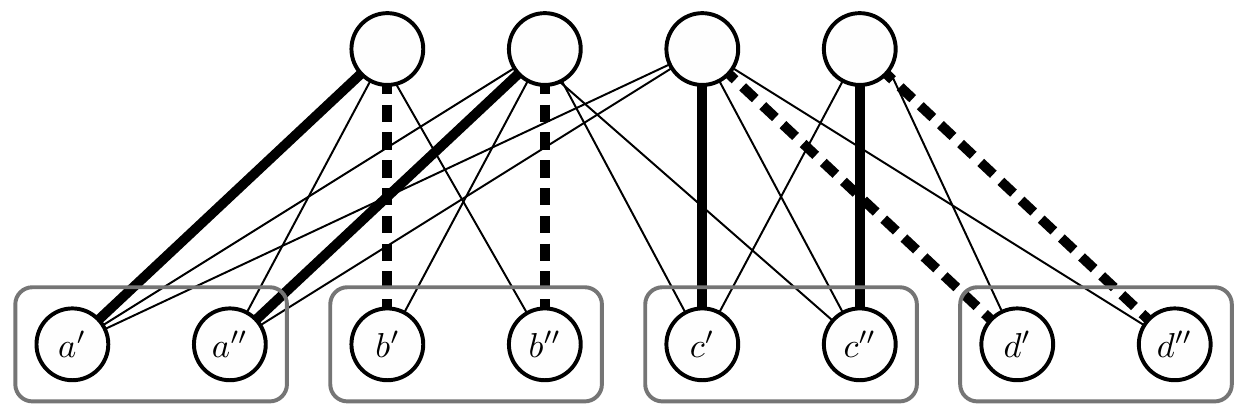}
  \caption{Two matchings (thick and dashed edges) corresponding to $F$.}
  \label{fig:par2}
\end{subfigure}
\caption{Reduction from \textsc{$C_{4k+2}$Free2Factor} to \textsc{PartitionIntoParityBases}.}
\label{fig:reduction2}
\end{figure}

For the forward direction, take a partition of $S'\cup S''$ into parity bases, that is, let $S=S_1\cup S_2$ be a partition of $S$ so that $S'_1\cup S''_1$ and $S'_2\cup S''_2$ are bases of $M$. Then there exist edge-disjoint matchings $N_1$ and $N_2$ in $G^+$ such that $N_i$ covers $S'_i\cup S''_i$ for $i=1,2$. As the two copies $s',s''$ of a node $s\in S$ form a pair, the union of these two matchings contains at most one copy of each edge $st\in E$. Thus, for $i=1,2$, $N_i$ can be naturally identified with a subset $F_i\subseteq E$ in which every node in $T$ has degree exactly $1$ while every node in $S$ has degree either $0$ or $2$. The union of $F_1$ and $F_2$ is then a $2$-factor in which every cycle has a length that is a multiple of $4$.

To see the the backward direction, consider a $C_{4k+2}$-free $2$-factor $F$ of $G$. Let $\{\{s_{i,1}t_{i,1},t_{i,1}s_{i,2},\dots,\allowbreak s_{i,2k_i}t_{i,2k_i}, t_{i,2k_i}s_{i,1}\}: i=1,\dots,q\}$ be the set of cycles appearing in $F$ where $s_{i,j}\in S$ and $t_{i,j}\in T$ for every $i,j$. Let 
\[
S_1=\{s_{i,2j-1}: j=1,\dots,k_i,i=1,\dots,q\}
\]
and
\[
S_2=\{s_{i,2j}: j=1,\dots,k_i,i=1,\dots,q\}.
\]
Then, if the indices are considered in a cyclic order,  
\[
N_1=\{s'_{i,2j-1}t_{i,2j-2},s''_{i,2j-1}t_{i,2j-1}: j=1,\dots,k_i,i=1,\dots,q\}
\]
and 
\[
N_2=\{s'_{i,2j}t_{i,2j-1},s''_{i,2j}t_{i,2j}: j=1,\dots,k_i,i=1,\dots,q\}
\]
are matchings covering $S'_1\cup S''_1$ and $S'_2\cup S''_2$, respectively, concluding the proof of the theorem.
\end{proof}

The proof of Theorem~\ref{thm:c4k2} implies that \textsc{PartitionIntoParityBases} includes NP-complete problems even when restricted to transversal matroids. However, we assumed throughout that the transversal matroid in question is given by a bipartite graph representation. It is not clear whether \textsc{PartitionIntoParityBases} remains difficult if the matroid is given by an explicit linear representation. The authors find it quite unlikely, but it might happen that the problem becomes tractable if a linear representation of the corresponding transversal matroid is also given. However, the randomized polynomial algorithm of \cite{marx2009parameterized} and the gap between the solvability of \textsc{MatroidParity} and \textsc{PartitionIntoParityBases} for transversal matroids given by a bipartite graph suggest that this is not the case, and \textsc{PartitionIntoParityBases} is most probably difficult even if explicit linear representations are given.

\section{Conclusions} \label{sec:conclusions}

In this paper we study a longstanding open problem of matroid theory, the problem of partitioning the ground set of two matroids into common bases. We prove that the problem is difficult, i.e., it requires an exponential number of independence queries in the independence oracle model. We also show that the problem remains intractable for matroids given by explicit linear representations.

The hardness of the general case increases the importance of tractable special cases. The long list of open questions and conjectures that fit in the framework of packing common bases shows that there is still a lot of work to do. For example, one of the simplest cases when one of the matroids is a partition matroid while the other one is graphic remains open.

\section*{Acknowledgement}

The authors are grateful to Andr\'as Frank, Csaba Kir\'aly, Vikt\'oria Kaszanitzky and Lilla T\'othm\'er\'esz for the helpful discussions. The anonymous referees provided several useful comments; the authors gratefully acknowledge their helpful contributions.

Krist\'of B\'erczi was supported by the J\'anos Bolyai Research Fellowship of the Hungarian Academy of Sciences and by the ÚNKP-19-4 New National Excellence Program of the Ministry for Innovation and Technology. Tam\'as Schwarcz was supported by the European Union, co-financed by the European Social Fund (EFOP-3.6.3-VEKOP-16-2017-00002). Projects no. NKFI-128673 and no. ED\_18-1-2019-0030 (Application-specific highly reliable IT solutions) have been implemented with the support provided from the National Research, Development and Innovation Fund of Hungary, financed under the FK\_18 and the Thematic Excellence Programme funding schemes, respectively.

\bibliographystyle{abbrv}
\bibliography{otka}

\end{document}